\documentclass[a4paper, 12pt, leqno]{amsart}
\usepackage{amsmath}
\usepackage{amscd}
\usepackage{amssymb}
\usepackage{amsthm}
\usepackage{amsfonts}
\usepackage{latexsym}

\usepackage[dvipdfmx]{graphicx, xcolor}

\theoremstyle{plain}
\newtheorem{lemma}{{\sc Lemma}}[section]

\newtheorem{proposition}[lemma]{{\sc Proposition}}
\newtheorem{theorem}[lemma]{{\sc Theorem}}

\theoremstyle{definition}
\newtheorem{remark}[lemma]{{\sc Remark}}

\numberwithin{equation}{section}

\def\Gh{{\mathfrak{h}}}

\def\BF{{\mathbb{F}}}

\def\BQ{{\mathbb{Q}}}
\def\BZ{{\mathbb{Z}}}

\def\Ker{\mathop{\rm Ker\hskip.5pt}\nolimits}

\begin{document}
\title[The Drinfeld pairing of a quantum group]
{Invariance of the Drinfeld pairing\\ of a quantum group}
\author{Toshiyuki TANISAKI}
\dedicatory{dedicated to Ken-ichi Shinoda in friendship and respect}
\thanks
{
The author was partially supported by Grants-in-Aid for Scientific Research (C) 24540026 
from Japan Society for the Promotion of Science.
}
\address{
Department of Mathematics, Osaka City University, 3-3-138, Sugimoto, Sumiyoshi-ku, Osaka, 558-8585 Japan}
\email{tanisaki@sci.osaka-cu.ac.jp}
\subjclass[2010]{20G05, 17B37}
\date{}
\begin{abstract}
We give two alternative proofs of the invariance of  the Drinfeld pairing under the action of the braid group.
One uses the Shapovalov form, and the other uses a characterization of the universal $R$-matrix.
\end{abstract}
\maketitle

\section{Introduction}
Let $U$ be the quantized enveloping algebra over $\BQ(q)$ associated to a symmetrizable generalized Cartan matrix $(a_{ij})_{i,j\in I}$.
We have the triangular decomposition $U=U^-U^0U^+$, where 
$U^0$ is the Cartan part, $U^+=\langle e_i\mid i\in I\rangle$ is the positive part, and $U^-=\langle f_i\mid i\in I\rangle$ is the negative part of $U$ respectively.
In application of the theory of quantized enveloping algebras to other fields such as mathematical physics and knot theory, the universal $R$-matrix plays a crucial role.
For example, to each representation of the quantized enveloping algebra one can construct a knot invariant by specializing the universal $R$-matrix.
Therefore, it is an important problem to give an explicit description of the universal $R$-matrix.
This problem is equivalent to giving an explicit description of 
the Drinfeld pairing $\tau:U^+\times U^-\to\BQ(q)$, which is  a bilinear form characterized by certain properties,
since the universal $R$-matrix is defined in terms of $\tau$ (see \cite{D}).
On the other hand, the Drinfeld pairing $\tau$ plays an important role in many aspects of the representation theory.
For example, in the finite case various properties of representations when $q$ is not a root of 1 are deduced using properties of $\tau$ (see for example \cite{Jan}).

For $i\in I$, denote by $T_i:U\to U$ the algebra automorphism introduced by Lusztig \cite{Lbook}
(in the finite case there is a different definition due to Levendorskii and Soibelman \cite{LS}).
It is a lift of the simple reflection of the Weyl group.
Let $W=\langle s_i\mid i\in I\rangle$ be the Weyl group.
Let $w\in W$, and
take a reduced expression  $w=s_{i_1}\cdots s_{i_r}$.
Set
\[
e_{\beta_k}=T_{i_1}\cdots T_{i_{k-1}}(e_{i_k}),
\qquad
f_{\beta_k}=T_{i_1}\cdots T_{i_{k-1}}(f_{i_k})
\]
for $k=1,\dots, r$.
Then there is a well-known formula for the value of
\begin{equation}
\label{eq:tef}
\tau(e_{\beta_r}^{m_r}\cdots e_{\beta_1}^{m_1},
f_{\beta_r}^{n_r}\cdots f_{\beta_1}^{n_1})
\end{equation}
(see \cite{KR}, \cite{LS}, \cite{Lbook}).
In the finite case 
$\{e_{\beta_r}^{m_r}\cdots e_{\beta_1}^{m_1}\mid m_i\geqq0\}$
(resp. $\{f_{\beta_r}^{n_r}\cdots f_{\beta_1}^{n_1}\mid n_i\geqq0\}$) forms a basis of 
$U^+$ (resp. $U^-$) so that
the formula for \eqref{eq:tef} gives an explicit description of $\tau$.
A crucial step in the proof of the formula for \eqref{eq:tef} (using Lusztig's definition of $T_i$) is the following invariance property;
\begin{align}
\label{eq:tef2}
&\tau(T_i^{-1}(x),T_i^{-1}(y))=\tau(x,y)
\\
\nonumber
&\qquad\qquad
(x\in U^+\cap T_i(U^+),\; y\in U^-\cap T_i(U^-)).
\end{align}
The original proof of this result in \cite{Lbook} uses lengthy computation concerning certain generating sets of the algebras $U^\pm\cap T_i(U^\pm)$ (in the detailed account \cite{Jan} it occupies whole Chapter 8A).

The aim of this note is to give two shorter proofs of \eqref{eq:tef2}.
The first one relies on a relation between the Drinfeld pairing and the Shapovalov form given in Proposition \ref{prop:DS} below.
We think Proposition \ref{prop:DS} is of independent interest.
The second one uses a well known characterization of the universal $R$-matrix.
We hope our investigation in this paper concerning $\tau$ including the new proofs of \eqref{eq:tef2} will be useful in the future especially in developing the representation theory of quantized enveloping algebras.

The second proof using the universal $R$-matrix has been obtained in examining the comment by H. Yamane suggesting the possibility of another approach along the line of Levendorskii and Soibelman \cite{LS}.
I would like to thank Hiroyuki Yamane for this crucial suggestion.

\section{quantized enveloping algebras}
Assume that we are given a finite-dimensional vector space $\Gh$ over $\BQ$ and 
linearly independent subsets 
$\{h_i\}_{i\in I}$, $\{\alpha_i\}_{i\in I}$ of $\Gh$, $\Gh^*$ respectively such that 
$(\langle \alpha_j,h_i\rangle)_{i, j\in I}$ is a symmetrizable generalized Cartan matrix.
Set 
\[
Q=\sum_{i\in I}\BZ\alpha_{i},\qquad
Q^+=\sum_{i\in I}\BZ_{>0}\alpha_i.
\]
The Weyl group $W$ is the subgroup of $GL(\Gh)$ generated by the involutions 
$s_i$ ($i\in I$) defined  by $s_i(h)=h-\langle\alpha_i,h\rangle h_i$ for $h\in \Gh$.
The contragredient action of $W$ on $\Gh^*$ is given by $s_i(\lambda)=\lambda-\langle\lambda,h_i\rangle\alpha_i$ ($i\in I$, $\lambda\in\Gh^*$).
Set $E=\sum_{i\in I}\BQ\alpha_i$.
We can take a $W$-invariant symmetric bilinear form
\[
(\;,\;):E\times E\to\BQ
\]
such that 
$\frac{(\alpha_i,\alpha_i)}2\in\BZ_{>0}$ for any $i\in I$.
Then we have $(\alpha_i,\alpha_j)\in\BZ$ for $i, j\in I$.
We assume that we are given a $\BZ$-form $\Gh_\BZ$ of $\Gh$ such that
$\langle\alpha_i,\Gh_\BZ\rangle\subset\BZ$ and $t_i:=\frac{(\alpha_i,\alpha_i)}2h_i\in\Gh_\BZ$ for any $i\in I$.
For $\gamma=\sum_in_i\alpha_i\in Q$ set $t_\gamma=\sum_in_it_i$.
Then we have $\langle \gamma,t_\delta\rangle=(\gamma,\delta)$ for $\gamma, \delta\in Q$.

For $n\in\BZ_{\geqq0}$ set 
\[
[n]_x=\frac{x^n-x^{-n}}{x-x^{-1}}\in\BZ[x,x^{-1}],
\quad
[n]!_x=[n]_x[n-1]_x\cdots[1]_x\in\BZ[x,x^{-1}].
\]

The quantized enveloping algebra $U$ associated to $\Gh$, $\{h_i\}_{i\in I}$, $\{\alpha_i\}_{i\in I}$, $(\;,\;)$, $\Gh_\BZ$ is the associative algebra over $\BF=\BQ(q)$ generated by the elements $k_h$, $e_i$, $f_i$ ($h\in\Gh_\BZ$, $i\in I$) satisfying the relations
\begin{align}
&
k_0=1,\qquad
k_hk_{h'}=k_{h+h'}
&(h, h'\in\Gh_\BZ),
\\
&
k_he_ik_{-h}=q^{\langle\alpha_i,h\rangle}e_i
&(h\in\Gh_\BZ, i\in I),
\\
&
k_hf_ik_{-h}=q^{-\langle\alpha_i,h\rangle}f_i
&(h\in\Gh_\BZ, i\in I),
\\
&
e_if_j-f_je_i=
\delta_{ij}
\frac{k_i-k_i^{-1}}{q_i-q_i^{-1}}
&(i, j\in I),
\\
&
\sum_{r+s=1-\langle\alpha_j,h_i\rangle}
(-1)^re_i^{(r)}e_je_i^{(s)}=0
&(i, j\in I,\;i\ne j),
\\
&
\sum_{r+s=1-\langle\alpha_j,h_i\rangle}
(-1)^rf_i^{(r)}f_jf_i^{(s)}=0
&(i, j\in I,\;i\ne j),
\end{align}
where $k_i=k_{t_i}$, $q_i=q^{(\alpha_i,\alpha_i)/2}$ for $i\in I$, and $e_i^{(r)}=\frac1{[r]!_{q_i}}e_i^r$, $f_i^{(r)}=\frac1{[r]!_{q_i}}f_i^r$ for $i\in I$, $r\in\BZ_{\geqq0}$.
For $\gamma\in Q$ we set $k_\gamma=k_{t_\gamma}$.

The associative algebra $U$ is endowed with a structure of Hopf algebra by
\begin{align}
&
\Delta(k_h)=k_h\otimes k_h,
\\
\nonumber
&
\Delta(e_i)=e_i\otimes1+k_i\otimes e_i,\quad
\Delta(f_i)=f_i\otimes k_i^{-1}+1\otimes f_i
\\
&\varepsilon(k_h)=1,\quad
\varepsilon(e_i)=\varepsilon(f_i)=0,
\\
&
S(k_h)=k_h^{-1},\quad
S(e_i)=-k_i^{-1}e_i,\quad
S(f_i)=-f_ik_i
\end{align}
for $h\in\Gh_\BZ, i\in I$.
We will sometimes use Sweedler's notation for the coproduct;
\[
\Delta(u)=\sum_{(u)}u_{(0)}\otimes u_{(1)}
\qquad(u\in U),
\]
and the iterated coproduct;
\[
\Delta_m(u)=\sum_{(u)_m}u_{(0)}\otimes\cdots\otimes u_{(m)}
\qquad(u\in U).
\]

Define subalgebras $U^0$, $U^+$, $U^-$, $U^{\geqq0}$, $U^{\leqq0}$ of $U$ by
\begin{gather*}
U^0=\langle k_h\mid h\in\Gh_\BZ\rangle,\quad
U^+=\langle e_i\mid i\in I\rangle,\quad
U^-=\langle f_i\mid i\in I\rangle,
\\
U^{\geqq0}=
\langle k_h, e_i\mid h\in\Gh_\BZ, i\in I\rangle,\quad
U^{\leqq0}=
\langle k_h, f_i\mid h\in\Gh_\BZ, i\in I\rangle.
\end{gather*}
Then we have 
\[
U^0=\bigoplus_{h\in\Gh_\BZ}\BF k_h.
\]
For $\gamma\in Q$ set 
\[
U_\gamma=
\{u\in U\mid k_huk_h^{-1}=q^{\langle\gamma,h\rangle}u\;(h\in\Gh_\BZ)\},
\qquad
U^\pm_\gamma=U_\gamma\cap U^\pm.
\]
Then we have
\[
U^\pm=\bigoplus_{\gamma\in Q^+}U^{\pm}_{\pm\gamma}.
\]
It is known that the multiplication of $U$ induces isomorphisms
\begin{gather*}
U\cong U^+\otimes U^0\otimes U^-
\cong U^-\otimes U^0\otimes U^+,
\\
U^{\geqq0}\cong U^+\otimes U^0\cong U^0\otimes U^+,
\quad
U^{\leqq0}\cong U^-\otimes U^0\cong U^0\otimes U^-
\end{gather*}
of vector spaces.

We define an algebra automorphism
\begin{equation}
\Phi:U\otimes U\to U\otimes U
\end{equation}
by
\[
\Phi(u\otimes u')=q^{-(\gamma,\delta)}uk_{-\delta}\otimes u'k_{-\gamma}
\qquad
(\gamma, \delta\in Q,\; u\in U_{\gamma}, \; u'\in U_{\delta}).
\]

Set 
\[
P=\{\lambda\in\Gh^*\mid \langle\lambda,\Gh_\BZ\rangle\subset\BZ\},
\quad
P^+
=
\{\lambda\in P\mid \langle\lambda,h_i\rangle\in\BZ_{\geqq0}\;(i\in I)\}.
\]
For a (left) $U$-module $V$ and $\lambda\in P$
we set
\[
V_\lambda=
\{v\in V\mid k_hv=q^{\langle\lambda,h\rangle}v\;(h\in\Gh_\BZ)\}.
\]
A $U$-module $V$ is said to be integrable if $V=\bigoplus_{\lambda\in P}V_\lambda$ and for any $v\in V$ and $i\in I$ there exists some $N>0$ such that $e_i^{(n)}v=f_i^{(n)}v=0$ for $n\geqq N$.
For $\lambda\in P^+$ define $U$-modules $V_+(\lambda)$, $V_-(-\lambda)$  by
\begin{align*}
V_+(\lambda)=&U/
\left(
\sum_{h\in\Gh_\BZ}
U(k_h-q^{\langle\lambda,h\rangle})
+
\sum_{i\in I}Ue_i
+
\sum_{i\in I}Uf_i^{(\langle\lambda,h_i\rangle+1)})
\right),
\\
V_-(-\lambda)=&U/
\left(
\sum_{h\in\Gh_\BZ}
U(k_h-q^{-\langle\lambda,h\rangle})
+
\sum_{i\in I}Uf_i
+
\sum_{i\in I}Ue_i^{(\langle\lambda,h_i\rangle+1)})
\right).
\end{align*}
They are known to be irreducible integrable $U$-modules.
For $\lambda\in P^+$ we set $v_\lambda=\overline{1}\in V_+(\lambda)$, and $v_{-\lambda}=\overline{1}\in V_-(-\lambda)$.

For $U$-modules $V$, $V'$ we regard $V\otimes V'$ as a $U$-module via the coproduct $\Delta:U\to U\otimes U$.
If $V$ and $V'$ are integrable, then so is $V\otimes V'$.

The following result follows easily from the proof of \cite[Lemma 2.1]{Jan}.
\begin{proposition}
\label{prop:pos}
The following conditions on $u\in U$ are equivalent to each other:
\begin{itemize}
\item[(a)] $u\in U^{\geqq0}$ $($resp.\ $u\in U^{\leqq0})$,
\item[(b)] for any integrable $U$-module $V$ and for any $\lambda\in P^+$ we have
$u(V\otimes v_\lambda)\subset V\otimes v_\lambda$
$($resp.\ $u(V\otimes v_{-\lambda})\subset V\otimes v_{-\lambda})$,
\item[(c)] for any integrable $U$-module $V$ and for any $\lambda\in P^+$ we have
$u(v_\lambda\otimes V)\subset v_\lambda\otimes V$
$($resp.\
$u(v_{-\lambda}\otimes V)\subset v_{-\lambda}\otimes V)$.
\end{itemize}
\end{proposition}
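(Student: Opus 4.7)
The plan is to prove (a) $\Rightarrow$ (b), (c) by a direct coproduct computation, and the converses by a contradiction argument using the triangular decomposition. For the easy direction I would first observe that $\Delta(U^{\geqq 0}) \subset U^{\geqq 0} \otimes U^{\geqq 0}$, immediate from $\Delta(e_i) = e_i \otimes 1 + k_i \otimes e_i$ and $\Delta(k_h) = k_h \otimes k_h$, and that every $z \in U^{\geqq 0}$ satisfies $z v_\lambda \in \BF v_\lambda$ because each $e_i$ kills $v_\lambda$ and each $k_h$ acts by a scalar. Writing $\Delta(u) = \sum u_{(0)} \otimes u_{(1)}$ with both legs in $U^{\geqq 0}$ then yields $u(v \otimes v_\lambda) \in V \otimes \BF v_\lambda$ and $u(v_\lambda \otimes v) \in \BF v_\lambda \otimes V$ at once. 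The $U^{\leqq 0}$ statements with $v_{-\lambda}$ are symmetric, using $f_i v_{-\lambda} = 0$.

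For (b) $\Rightarrow$ (a) I would argue by contradiction. Use the triangular decomposition $U \cong U^- \otimes U^{\geqq 0}$ to write $u = \sum_\alpha w_\alpha z_\alpha$ with $\{w_\alpha\}$ a weight-homogeneous basis of $U^-$, $w_\alpha \in U^-_{-\gamma_\alpha}$, and $z_\alpha \in U^{\geqq 0}$. If $u \notin U^{\geqq 0}$, pick $\gamma^* \in Q^+ \setminus \{0\}$ maximal (in the partial order on $Q^+$) such that $z_\alpha \ne 0$ for some $\alpha$ with $\gamma_\alpha = \gamma^*$; this is possible because the sum is finite. A short induction based on $\Delta(f_i) = f_i \otimes k_i^{-1} + 1 \otimes f_i$ shows that for $w \in U^-$, the element $\Delta(w) - 1 \otimes w$ has its first tensor factor of strictly negative weight. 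Combined with the identity $z_\alpha(v \otimes v_\lambda) = z_\alpha^{(\lambda)} v \otimes v_\lambda$, where $z \mapsto z^{(\lambda)}$ is the algebra automorphism of $U^{\geqq 0}$ sending $k_h \mapsto q^{\langle \lambda, h\rangle} k_h$ and $e_i \mapsto e_i$, the component of $u(v \otimes v_\lambda)$ in $V \otimes V_+(\lambda)_{\lambda - \gamma^*}$ collapses to
\[
\sum_{\gamma_\alpha = \gamma^*} z_\alpha^{(\lambda)} v \otimes w_\alpha v_\lambda,
\]
since every other $\alpha$ either has second-factor weight range not reaching $-\gamma^*$ (when $\gamma^* \not\leq \gamma_\alpha$) or satisfies $z_\alpha = 0$ (when $\gamma_\alpha > \gamma^*$ strictly, by maximality).

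The contradiction then follows by choosing $\lambda \in P^+$ with $\langle \lambda, h_i\rangle$ sufficiently large that $w \mapsto w v_\lambda$ is injective on $U^-_{-\gamma^*}$. Linear independence of $\{w_\alpha v_\lambda\}_{\gamma_\alpha = \gamma^*}$ together with (b) then forces $z_\alpha^{(\lambda)} v = 0$ for every $v$ in every integrable $V$, so $z_\alpha^{(\lambda)} = 0$ by faithfulness of the $U$-action on the sum of integrable modules, and finally $z_\alpha = 0$ since $z \mapsto z^{(\lambda)}$ is invertible; this contradicts the choice of $\gamma^*$. The implication (c) $\Rightarrow$ (a) is parallel, testing on $v_\lambda \otimes v$ and tracking the weight drop in the first tensor factor via the analogue $\Delta(w) \equiv w \otimes k_{-\gamma}$ modulo terms of strictly negative first-factor weight for $w \in U^-_{-\gamma}$. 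The $U^{\leqq 0}$ cases follow by swapping $v_\lambda$ with $v_{-\lambda}$. I expect the main obstacle to be the final step combining linear independence of $\{w_\alpha v_\lambda\}$ with genericity of $\lambda$; both ingredients are standard, but this is the one place where the choice of $\lambda$ does real work.
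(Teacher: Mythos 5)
Your argument is correct, but there is nothing in the paper to compare it against line by line: the paper gives no proof of Proposition \ref{prop:pos}, stating only that it ``follows easily from the proof of \cite[Lemma 2.1]{Jan}''. What you have written is essentially the standard argument behind that citation, supplied in full: the easy direction from $\Delta(U^{\geqq0})\subset U^{\geqq0}\otimes U^{\geqq0}$ and $U^{\geqq0}v_\lambda\subset\BF v_\lambda$; the converse by expanding $u$ through $U\cong U^-\otimes U^{\geqq0}$, extracting the component of $u(v\otimes v_\lambda)$ in $V\otimes V_+(\lambda)_{\lambda-\gamma^*}$ for a maximal $\gamma^*$, and killing the surviving coefficients by taking $\lambda$ dominant enough that $w\mapsto wv_\lambda$ is injective on $U^-_{-\gamma^*}$ --- which indeed holds once $\langle\lambda,h_i\rangle$ exceeds the coefficient of $\alpha_i$ in $\gamma^*$ for every $i$, since the kernel of $U^-\to V_+(\lambda)$ is $\sum_iU^-f_i^{(\langle\lambda,h_i\rangle+1)}$. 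Your two imported facts sit well inside the paper's own toolkit: faithfulness of $U$ on the class of integrable modules is exactly \cite[3.5.4]{Lbook}, which the paper itself invokes in the proof of Lemma \ref{lem:p0}, and the twist $z\mapsto z^{(\lambda)}$ ($k_h\mapsto q^{\langle\lambda,h\rangle}k_h$, $e_i\mapsto e_i$) is easily checked to be a well-defined algebra automorphism of $U^{\geqq0}$ on the defining relations, so your collapse of $u(v\otimes v_\lambda)$ and the final invertibility step are sound. One slip to fix in the $(c)\Rightarrow(a)$ direction: for $w\in U^-_{-\gamma}$ the correct statement is that $\Delta(w)-w\otimes k_{-\gamma}$ has \emph{second} tensor factor of strictly negative weight, equivalently first factor of weight strictly greater than $-\gamma$; as written (``strictly negative first-factor weight'') the condition is satisfied by the leading term $w\otimes k_{-\gamma}$ itself and so separates nothing. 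With that read correctly, projecting onto $V_+(\lambda)_{\lambda-\gamma^*}\otimes V$ isolates $\sum_{\gamma_\alpha=\gamma^*}w_\alpha v_\lambda\otimes k_{-\gamma^*}\tilde z_\alpha v$, where $\tilde z_\alpha$ is an invertibly twisted $z_\alpha$, and your maximality argument closes the proof exactly as in the $(b)$ case.
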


\section{braid group action}
We set
\[
\exp_x(y)=\sum_{n=0}^\infty\frac{x^{n(n-1)/2}}{[n]!_x}y^n\in(\BQ(x))[[y]].
\]
Then we have $\exp_x(y)\exp_{x^{-1}}(-y)=1$.

For $i\in I$ and $t\in\BF^\times$ we set
\[
\sigma_i(t)
=\exp_{q_i}(tq_i^{-1}k_ie_i)
\exp_{q_i}(-t^{-1}f_i)
\exp_{q_i}(tq_ik_i^{-1}e_i)
\]
(see \cite{Sa}).
It is regarded as an invertible operator on a integrable $U$-module.
Moreover, for any integrable $U$-module $V$ and any $\lambda\in P$ we have $\sigma_i(t)V_\lambda=V_{s_i\lambda}$.
If we are give $t_i\in\BF^\times$ for each $i\in I$, then the family $\{\sigma_i(t_i)\}_{i\in I}$ satisfies the braid relations.
We have
\begin{align*}
&\sigma_i(t)
\\
=&
\exp_{q_i}(tq_i^{-n-1}k_i^{n+1}e_i)
\exp_{q_i}(-t^{-1}q_i^{-n}k_i^{-n}f_i)
\exp_{q_i}(tq_i^{-n+1}k_i^{n-1}e_i)
\\
=&
\exp_{q_i}(-t^{-1}q_i^{-n-1}k_i^{-n-1}f_i)
\exp_{q_i}(tq_i^{-n}k_i^{n}e_i)
\exp_{q_i}(-t^{-1}q_i^{-n+1}k_i^{-n+1}f_i)
\end{align*}
for any $n\in\BZ$.

For $i\in I$ we define operators $q_i^{\pm h_i(h_i+1)/2}$ and $q_i^{\pm h_i(h_i-1)/2}$ on a integrable $U$-module $V$ by
\[
q_i^{\pm h_i(h_i+1)/2}v=q_i^{\pm \lambda(h_i)(\lambda(h_i)+1)/2}v,
\quad
q_i^{\pm h_i(h_i-1)/2}v=q_i^{\pm \lambda(h_i)(\lambda(h_i)-1)/2}v
\]
for $\lambda\in P$, $v\in V_\lambda$.
Then in the notation of \cite{Lbook} we have
\begin{align*}
T'_{i,-1}=q_i^{-h_i(h_i+1)/2}\sigma_i(-1),\qquad
T''_{i,-1}=q_i^{-h_i(h_i-1)/2}\sigma_i(1),
\end{align*}
and $T'_{i,1}=(T''_{i,-1})^{-1}$, $T''_{i,1}=(T'_{i,-1})^{-1}$.
\begin{remark}
If we extend the base field $\BF=\BQ(q)$ to $\BQ(q^{1/4})$, we can write
\begin{align*}
\sigma_i(t)
=&
q_i^{h_i^2/2}\exp_{q_i}(te_i)
q_i^{-h_i^2/4}\exp_{q_i}(-t^{-1}f_i)q_i^{-h_i^2/4}
\exp_{q_i}(te_i)
\\
=&
q_i^{h_i^2/2}\exp_{q_i}(-t^{-1}f_i)
q_i^{-h_i^2/4}\exp_{q_i}(te_i)q_i^{-h_i^2/4}
\exp_{q_i}(-t^{-1}f_i)
\\
=&
\exp_{q_i}(te_i)
q_i^{-h_i^2/4}\exp_{q_i}(-t^{-1}f_i)q_i^{-h_i^2/4}
\exp_{q_i}(te_i)q_i^{h_i^2/2}
\\
=&
\exp_{q_i}(-t^{-1}f_i)
q_i^{-h_i^2/4}\exp_{q_i}(te_i)q_i^{-h_i^2/4}
\exp_{q_i}(-t^{-1}f_i)q_i^{h_i^2/2}.
\end{align*}
\end{remark}

In the following we set $T_i=\sigma_i(-1)^{-1}q_i^{h_i(h_i+1)/2}$.
In the notation of \cite{Lbook} we have $T_i=T''_{i,1}$.
There exists a unique algebra automorphism $T_i:U\to U$ such that 
for any integrable $U$-module $V$ we have
$
T_iuv=T_i(u)T_iv$\;($u\in U, v\in V$).
Then we have $T_i(U_\gamma)=U_{s_i\gamma}$ for $\gamma\in Q$.
The action of $T_i$ on $U$ is given by
\begin{gather*}
T_i(k_h)=k_{s_ih},
\quad
T_i(e_i)=-f_ik_i,
\quad
T_i(f_i)=-k_i^{-1}e_i\qquad(h\in\Gh_\BZ),
\\
T_i(e_j)=
\sum_{r+s=-\langle \alpha_j,h_i\rangle}
(-1)^rq_i^{-r}e_i^{(s)}e_je_i^{(r)}\qquad(j\in I, i\ne j),
\\
T_i(f_j)=
\sum_{r+s=-\langle \alpha_j,h_i\rangle}
(-1)^rq_i^{r}f_i^{(r)}f_jf_i^{(s)}\qquad(j\in I, i\ne j)
\end{gather*}
(see \cite{Lbook}).
We can easily check that
\begin{equation}
\label{eq:Phi}
\Phi\cdot(T_i\otimes T_i)=(T_i\otimes T_i)\cdot\Phi.
\end{equation}

For $i\in I$ and integrable $U$-modules $V$, $V'$ we define operators $Z_i:V\otimes V'\to V\otimes V'$ and $R_i:V\otimes V'\to V\otimes V'$ by
\[
Z_i=
\exp_{q_i}((q_i-q_i^{-1})f_i\otimes e_i),
\qquad
R_i
=
\exp_{q_i}^{-1}(-(q_i-q_i^{-1})e_i\otimes f_i).
\]
They are invertible with
\[
Z_i^{-1}=P(R_i),
\]
where $P(x\otimes y)=y\otimes x$.

The following result is well-known (see \cite{KR}, \cite{LS}, \cite{Lbook}).
\begin{proposition}
\label{prop:T}
Let $V$ and $V'$ be integrable $U$-modules.
Then as an operator on $V\otimes V'$ we have
\begin{align*}
T_i=(T_i\otimes T_i)\cdot Z_i=\Phi^{-1}(R_i^{-1})\cdot (T_i\otimes T_i).
\end{align*}
\end{proposition}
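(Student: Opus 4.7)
The plan is to verify both identities by direct computation on $V \otimes V'$, starting from the defining formula $T_i = \sigma_i(-1)^{-1} q_i^{h_i(h_i+1)/2}$, where the action on $V \otimes V'$ is induced from the coproduct $\Delta$. The result is classical and essentially carried out in \cite{KR}, \cite{LS}, and \cite{Lbook}; my plan follows that line.

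First I would expand $\sigma_i(-1)$ on $V \otimes V'$ using $\Delta(e_i) = e_i \otimes 1 + k_i \otimes e_i$, $\Delta(f_i) = f_i \otimes k_i^{-1} + 1 \otimes f_i$, and $\Delta(k_i) = k_i \otimes k_i$. From $k_i e_i = q_i^2 e_i k_i$ one checks that the two summands of $\Delta(e_i)$ satisfy $(k_i \otimes e_i)(e_i \otimes 1) = q_i^2 (e_i \otimes 1)(k_i \otimes e_i)$, and similarly the two summands of $\Delta(f_i)$ satisfy a $q_i^{\pm 2}$-commutation. By the standard $q$-exponential factorization identity (if $yx = q^2 xy$ then $\exp_q(x+y) = \exp_q(y)\exp_q(x)$), each of the three $q$-exponentials entering $\sigma_i(-1)$ factors on $V \otimes V'$ into a product of two, giving six factors in total.

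Next I would rearrange these six factors to collect those acting only on $V$ (which reassemble into $\sigma_i(-1) \otimes 1$) and those acting only on $V'$ (into $1 \otimes \sigma_i(-1)$). Moving the exponentials past each other requires commuting pairs such as $e_i \otimes 1$ with $1 \otimes f_i$, or $f_i \otimes k_i^{-1}$ with $1 \otimes e_i$; these commute only up to explicit residual terms involving $f_i \otimes e_i$ and powers of $k_i$ on one slot. The crossed residuals collect into a single $q$-exponential in $f_i \otimes e_i$ with Cartan prefactors.

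Finally, I would compare the Cartan prefactor $q_i^{h_i(h_i+1)/2}$ on $V \otimes V'$ with the tensor product $q_i^{h_i(h_i+1)/2} \otimes q_i^{h_i(h_i+1)/2}$. Expanding $\Delta(h_i)^2 = h_i^2 \otimes 1 + 2 h_i \otimes h_i + 1 \otimes h_i^2$, the cross term contributes an extra operator $q_i^{h_i \otimes h_i}$. Combined with the residual from the previous step, this collapses to $Z_i = \exp_{q_i}((q_i - q_i^{-1}) f_i \otimes e_i)$, yielding $T_i = (T_i \otimes T_i) Z_i$. The second identity then follows by conjugating the first by $T_i \otimes T_i$: one checks that $\Phi^{-1}(R_i^{-1}) = (T_i \otimes T_i)(Z_i)$ as operators on $V \otimes V'$, using the explicit action of the algebra automorphism $T_i$ on $e_i, f_i$ recorded earlier and the definitions of $R_i$ and $\Phi$. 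The main obstacle is the bookkeeping of $q_i$-powers and $k_i$-factors in the six-factor rearrangement, together with the quadratic Cartan correction, to recover precisely the operator $Z_i$ in its stated normalization.
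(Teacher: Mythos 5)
The first thing to say is that the paper itself offers no proof of Proposition \ref{prop:T}: it is quoted as well known, with citations to Kirillov--Reshetikhin, Levendorskii--Soibelman and Lusztig, so your computation has to be measured against those sources rather than against anything in the text. Your plan is indeed the classical computational route (closest in spirit to Saito \cite{Sa}, from whom the paper's definition of $\sigma_i(t)$ and its shifted product formulas are taken), and several of your ingredients are correct: the $q$-commutation $(k_i\otimes e_i)(e_i\otimes 1)=q_i^2(e_i\otimes 1)(k_i\otimes e_i)$ and the factorization $\exp_q(x+y)=\exp_q(y)\exp_q(x)$ for $yx=q^2xy$ are right; the Cartan cross term is right, since on $V_\lambda\otimes V'_\mu$ one has $q_i^{h_i(h_i+1)/2}=\bigl(q_i^{h_i(h_i+1)/2}\otimes q_i^{h_i(h_i+1)/2}\bigr)q_i^{h_i\otimes h_i}$; and the reduction of the second equality to the first is sound, because conjugating the operator $Z_i$ by $T_i\otimes T_i$ is the same as applying the algebra automorphism, so the second equality is equivalent to the element identity $\Phi^{-1}(R_i^{-1})=(T_i\otimes T_i)(Z_i)$, which one verifies degree by degree using $T_i(f_i)\otimes T_i(e_i)=k_i^{-1}e_i\otimes f_ik_i$, $(k_i^{-1}e_i)^n=q_i^{n(n-1)}k_i^{-n}e_i^n$, $(f_ik_i)^n=q_i^{-n(n-1)}f_i^nk_i^n$ and $\Phi^{-1}(e_i^n\otimes f_i^n)=k_i^{-n}e_i^n\otimes f_i^nk_i^n$. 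One caution here: this check only closes if you read the paper's ``$\exp_{q_i}^{-1}$'' in the definition of $R_i$ as $\exp_{q_i^{-1}}$, so that $R_i^{-1}=\exp_{q_i}\bigl((q_i-q_i^{-1})e_i\otimes f_i\bigr)$; this reading is forced by the stated relation $Z_i^{-1}=P(R_i)$, and with the other reading you would hit a spurious sign $(-1)^n$ in each graded piece.

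The genuine gap is in the central rearrangement step, where your description of the difficulty is wrong in a way that hides where the actual work lies. The pairs you name as the obstruction commute on the nose or $q$-commute: $e_i\otimes 1$ and $1\otimes f_i$ commute exactly, and $f_i\otimes k_i^{-1}$ even commutes with $k_i^2\otimes k_ie_i$ up to an exact scalar $q_i^2$. Note that the arguments of the six factors are not the bare summands of $\Delta(e_i)$, $\Delta(f_i)$: e.g.\ $\Delta(k_ie_i)=k_ie_i\otimes k_i+k_i^2\otimes k_ie_i$, so the three ``first-slot'' factors carry \emph{different} powers of $k_i$ in the second slot ($k_i$, $k_i^{-1}$, $k_i^{-1}$), and therefore do not reassemble into $\sigma_i(-1)\otimes(\text{anything})$ by mere reordering; you must either restrict the opposite slot to a single weight space (where those $k_i$'s become scalars) or invoke the $n$-shifted expressions for $\sigma_i(t)$ recorded in Section 3 of the paper, which exist precisely for this purpose. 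Second, the mechanism producing the $f_i\otimes e_i$ exponential is not a ``residual commutator'' but the pentagon-type identity for $q$-exponentials: if $ba=q^2ab$ then
\begin{equation*}
\exp_q(a)\exp_q(b)=\exp_q(b)\,\exp_q\bigl((1-q^2)ab\bigr)\,\exp_q(a),
\end{equation*}
applied to crossed pairs such as $a=f_i\otimes k_i^{-1}$, $b=-q_i^{-1}k_i^2\otimes k_ie_i$, whose product is a Cartan-dressed multiple of $f_i\otimes e_i$ (note $1-q^2=-q(q-q^{-1})$, which is where the coefficient $q_i-q_i^{-1}$ of $Z_i$ comes from); the several residuals so produced must then be merged and conjugated past the remaining factors, and one must still invert, since $T_i=\sigma_i(-1)^{-1}q_i^{h_i(h_i+1)/2}$. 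None of this is impossible --- it is exactly the computation in the cited references --- but as written your sketch asserts the conclusion of the hard step while misidentifying the tools needed to carry it out, so it is an outline of the known proof rather than a proof.
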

\begin{lemma}
For $u\in U$ we have
\begin{align}
\label{eq:T1}
\Delta(T_i^{-1}(u))
=&
Z_i^{-1}\cdot(T_i^{-1}\otimes T_i^{-1})(\Delta(u))\cdot Z_i,
\\
\label{eq:T2}
\Delta(T_i(u))
=&
\Phi^{-1}(R_i^{-1})\cdot(T_i\otimes T_i)(\Delta(u))\cdot \Phi^{-1}(R_i).\end{align}
as operators  on the tensor product of two integrable $U$-modules.
\end{lemma}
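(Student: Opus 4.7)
The plan is to derive both identities directly from Proposition \ref{prop:T} together with the defining intertwining property of $T_i$ on integrable modules, namely $T_i(uv) = T_i(u)T_i(v)$ for $u \in U$ and $v$ in any integrable $U$-module.

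First I would note that since $V \otimes V'$ is itself integrable, the intertwining property applied there gives
\[
\Delta(T_i(u)) = T_i \circ \Delta(u) \circ T_i^{-1},
\qquad
\Delta(T_i^{-1}(u)) = T_i^{-1} \circ \Delta(u) \circ T_i
\]
as operators on $V \otimes V'$. Next, since the intertwining property is functorial, the operator $T_i \otimes T_i$ on $V \otimes V'$ conjugates any element $x \in U \otimes U$ (acting on $V \otimes V'$ via its two tensor factors) to the operator associated with $(T_i \otimes T_i)(x)$, where the latter refers to the algebra automorphism $T_i \otimes T_i$ on $U \otimes U$. In formulas,
\[
(T_i \otimes T_i) \cdot x \cdot (T_i^{-1} \otimes T_i^{-1}) = (T_i \otimes T_i)(x).
\]

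Now I would substitute the two expressions from Proposition \ref{prop:T}. For \eqref{eq:T1}, using $T_i = (T_i \otimes T_i)\cdot Z_i$ and hence $T_i^{-1} = Z_i^{-1}\cdot(T_i^{-1} \otimes T_i^{-1})$, we compute
\[
\Delta(T_i^{-1}(u))
= Z_i^{-1}\cdot(T_i^{-1}\otimes T_i^{-1})\cdot \Delta(u) \cdot (T_i\otimes T_i)\cdot Z_i
= Z_i^{-1}\cdot(T_i^{-1}\otimes T_i^{-1})(\Delta(u))\cdot Z_i.
\]
For \eqref{eq:T2}, using the other form $T_i = \Phi^{-1}(R_i^{-1})\cdot(T_i \otimes T_i)$ and hence $T_i^{-1} = (T_i^{-1} \otimes T_i^{-1})\cdot \Phi^{-1}(R_i)$, the analogous calculation gives
\[
\Delta(T_i(u))
= \Phi^{-1}(R_i^{-1})\cdot(T_i\otimes T_i)(\Delta(u))\cdot \Phi^{-1}(R_i).
\]

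The computation is essentially a one-line substitution once the two preliminary observations are in place, so there is no real obstacle; the only thing to be careful about is the notational distinction between $(T_i \otimes T_i)$ viewed as an operator on $V \otimes V'$ (where it conjugates operators) and as the algebra automorphism of $U \otimes U$ (which takes elements to elements). The identity that bridges them is exactly the functoriality statement above, which is itself a direct consequence of $T_i$ being an intertwiner.
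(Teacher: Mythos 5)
Your proof is correct and takes essentially the same approach as the paper's: both start from the intertwining identity $\Delta(T_i^{\pm1}(u))=T_i^{\pm1}\cdot\Delta(u)\cdot T_i^{\mp1}$ on $V\otimes V'$ and then substitute the two factorizations of $T_i$ from Proposition \ref{prop:T}, invoking the conjugation identity $(T_i\otimes T_i)\cdot x\cdot(T_i^{-1}\otimes T_i^{-1})=(T_i\otimes T_i)(x)$, which the paper uses implicitly in its last step. The only difference is cosmetic: you spell out the derivation of \eqref{eq:T2} and make the functoriality observation explicit, whereas the paper dismisses that case as ``similar.''
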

\begin{proof}
By Proposition \ref{prop:T} we have
\begin{align*}
\Delta(T_i^{-1}(u))
=&T_i^{-1}\cdot\Delta(u)\cdot T_i
\\
=&
Z_i^{-1}\cdot(T_i^{-1}\otimes T_i^{-1})\cdot\Delta(u)\cdot(T_i\otimes T_i)\cdot Z_i
\\
=&Z_i^{-1}\cdot(T_i^{-1}\otimes T_i^{-1})(\Delta(u))\cdot Z_i,
\end{align*}
and hence \eqref{eq:T1} holds.
The proof of \eqref{eq:T2} is similar.
\end{proof}

Using Proposition \ref{prop:T} we see easily the following (see \cite[Lemma 2.8]{T}).
\begin{lemma}
\label{lem:TP}
We have
\begin{align*}
&\Delta(T_i(U^{\geqq0}))\subset U\otimes T_i(U^{\geqq0}),\quad
\Delta(T_i(U^{\leqq0}))\subset T_i(U^{\leqq0})\otimes U,
\\
&\Delta(T_i^{-1}(U^{\geqq0}))\subset T_i^{-1}(U^{\geqq0})\otimes U,\quad
\Delta(T_i^{-1}(U^{\leqq0}))\subset U\otimes T_i^{-1}(U^{\leqq0}).
\end{align*}
\end{lemma}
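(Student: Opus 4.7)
My plan is to deduce each of the four inclusions from the module-theoretic characterization of $U^{\geqq0}$ and $U^{\leqq0}$ in Proposition \ref{prop:pos}, transported across $T_i^{\pm1}$ via Proposition \ref{prop:T}. I would write out the argument for $\Delta(T_i(U^{\geqq0}))\subset U\otimes T_i(U^{\geqq0})$ in detail; the other three inclusions are handled by the same scheme after swapping the roles of $v_\lambda,v_{-\lambda}$ and of the two tensor factors.

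First, I would establish the analog of Proposition \ref{prop:pos}(b) for $T_i(U^{\geqq0})$: namely, $y\in T_i(U^{\geqq0})$ if and only if $y(V\otimes T_iv_\lambda)\subset V\otimes T_iv_\lambda$ for every integrable $U$-module $V$ and every $\lambda\in P^+$. The key input is that $e_iv_\lambda=0$ forces $Z_i=\exp_{q_i}((q_i-q_i^{-1})f_i\otimes e_i)$ to act as the identity on $V\otimes v_\lambda$, so by Proposition \ref{prop:T},
\[
T_i(V\otimes v_\lambda)=(T_i\otimes T_i)Z_i(V\otimes v_\lambda)=V\otimes T_iv_\lambda,
\]
and the criterion follows by conjugating Proposition \ref{prop:pos}(b) by $T_i$.

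Next, for $x=T_i(u)$ with $u\in U^{\geqq0}$ and arbitrary integrable $V_1,V_2$ and $\lambda\in P^+$, the tensor product $V_1\otimes V_2$ is again integrable, so applying the above criterion with $V=V_1\otimes V_2$ and using coassociativity yields
\[
\Delta(x)(V_1\otimes V_2\otimes T_iv_\lambda)\subset V_1\otimes V_2\otimes T_iv_\lambda.
\]
To conclude $\Delta(x)\in U\otimes T_i(U^{\geqq0})$, I would write $\Delta(x)=\sum_\alpha a_\alpha\otimes b_\alpha$ with $\{a_\alpha\}$ linearly independent in $U$, use faithfulness of $U$ on the category of integrable modules to find $V_1$ and vectors in $V_1$ on which the $a_\alpha$'s act independently, and deduce that each $b_\alpha$ preserves $V_2\otimes T_iv_\lambda$ for every $V_2$ and $\lambda$, whence $b_\alpha\in T_i(U^{\geqq0})$ by the first step.

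The main obstacle I anticipate is spelling out this final separation step cleanly; it is standard but wants a little care with weight-space decompositions. The three remaining inclusions follow from the same template, using Proposition \ref{prop:pos}(c) with $v_{-\lambda}$ for $T_i(U^{\leqq0})$, (c) with $v_\lambda$ for $T_i^{-1}(U^{\geqq0})$, and (b) with $v_{-\lambda}$ for $T_i^{-1}(U^{\leqq0})$; in the two $T_i^{-1}$ cases one invokes instead the factorization $T_i^{-1}=(T_i^{-1}\otimes T_i^{-1})\Phi^{-1}(R_i)$ from Proposition \ref{prop:T}, and checks that $\Phi^{-1}(R_i)$ acts as the identity on $v_\lambda\otimes V$ or $V\otimes v_{-\lambda}$ because the $k$-shifts introduced by $\Phi^{-1}$ preserve annihilation of the relevant weight vector by $e_i$ or $f_i$.
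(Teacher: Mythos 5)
Your proof is correct, but it follows a genuinely different route from the paper's. The paper offers no written argument for Lemma \ref{lem:TP}: it points to Proposition \ref{prop:T} and to \cite[Lemma 2.8]{T}, and the intended mechanism is the conjugation formulas \eqref{eq:T1}--\eqref{eq:T2}. For instance, from $\Delta(T_i(u))=\Phi^{-1}(R_i^{-1})\cdot(T_i\otimes T_i)(\Delta(u))\cdot\Phi^{-1}(R_i)$ one notes that $(T_i\otimes T_i)(\Delta(u))\in T_i(U^{\geqq0})\otimes T_i(U^{\geqq0})$ while $\Phi^{-1}(R_i^{\pm1})$ is a series with terms (up to scalars) $e_i^nk_{-n\alpha_i}\otimes f_i^nk_{n\alpha_i}$, whose second legs lie in $T_i(U^{\geqq0})$ because $f_i=T_i(-e_ik_i)$ and $U^0\subset T_i(U^{\geqq0})$; the inclusion is then read off, at the cost of bookkeeping with the weight-graded completions in which $Z_i$ and $\Phi^{-1}(R_i)$ live. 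You avoid completions altogether: you transport the weight-vector criterion of Proposition \ref{prop:pos} across $T_i^{\pm1}$, using that $Z_i$ acts as the identity on $V\otimes v_\lambda$ and $v_{-\lambda}\otimes V$, and $\Phi^{-1}(R_i)$ on $v_\lambda\otimes V$ and $V\otimes v_{-\lambda}$ (your matching of criteria (b)/(c) and of $v_{\pm\lambda}$ to the four inclusions is exactly right), then split $V=V_1\otimes V_2$ and separate tensor legs. This is in fact the same technique the paper itself deploys for Lemma \ref{lem:pr} and at the end of Lemma \ref{lem:p0}, and there is no circularity, since you use only Propositions \ref{prop:pos} and \ref{prop:T}; the trade-off is that the paper's route is shorter once one accepts manipulating the topological tensor algebra, while yours is self-contained and elementary.

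One precision on the step you flagged: a single vector $w_1\in V_1$ with $\{a_\alpha w_1\}$ linearly independent need not exist (operators can be linearly independent yet have pointwise dependent orbits), so do not phrase the separation as finding vectors ``on which the $a_\alpha$ act independently.'' The clean version: fix $w_2$ and $\lambda$, set $c_\alpha=\Delta(b_\alpha)(w_2\otimes T_iv_\lambda)$, choose a weight-compatible complement $C$ of $V_2\otimes\BF T_iv_\lambda$ in $V_2\otimes V_+(\lambda)$ with projection $\pi$; your inclusion gives $\sum_\alpha a_\alpha w_1\otimes\pi(c_\alpha)=0$ for all $w_1$ and all integrable $V_1$. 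Expanding $\pi(c_\alpha)=\sum_\beta\mu_{\alpha\beta}z_\beta$ in a basis of $C$, the coefficients $\mu_{\alpha\beta}$ do not depend on $w_1$, so \cite[3.5.4]{Lbook} (the same faithfulness statement the paper invokes in Lemma \ref{lem:p0}) yields $\sum_\alpha\mu_{\alpha\beta}a_\alpha=0$ in $U$, hence $\mu_{\alpha\beta}=0$ by independence of the $a_\alpha$, and each $b_\alpha$ preserves $V_2\otimes\BF T_iv_\lambda$ as required. With this repair, which is exactly the ``little care'' you anticipated, your argument is complete.
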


\begin{lemma}
\label{lem:p0}
We have
\begin{align*}
&U^+\cap T_i(U^{\geqq0})=U^+\cap T_i(U^+),\quad
U^-\cap T_i(U^{\leqq0})=U^-\cap T_i(U^-),
\\
&U^+\cap T_i^{-1}(U^{\geqq0})=U^+\cap T_i^{-1}(U^+),\quad
U^-\cap T_i^{-1}(U^{\leqq0})=U^-\cap T_i^{-1}(U^-).
\end{align*}
\end{lemma}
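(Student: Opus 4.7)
The plan is to exploit the weight-space decomposition. Since $U^0$ has weight zero and $U^+$ decomposes as $\bigoplus_{\gamma \in Q^+} U^+_\gamma$ with $U^+_0 = \BF\cdot 1$, the algebra $U^{\geqq 0}$ inherits a $Q^+$-grading in which the zero-weight piece is $U^0$ while every nonzero-weight piece lies entirely inside $U^+$; the analogous description holds for $U^{\leqq 0}$ with $Q^-$ in place of $Q^+$. Furthermore $T_i$ sends $U_\gamma$ onto $U_{s_i\gamma}$ and maps $U^0$ to itself via $T_i(k_h) = k_{s_i h}$, so it respects the weight decomposition up to the $s_i$-twist and restricts to an algebra automorphism of $U^0$ fixing $\BF$.

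For the first identity I would take $x \in U^+ \cap T_i(U^{\geqq 0})$, set $y = T_i^{-1}(x) \in U^{\geqq 0}$, and decompose $y = y_0 + \sum_{\mu \in Q^+ \setminus\{0\}} y_\mu$ with $y_0 \in U^0$ and $y_\mu \in U^+_\mu$. The components $y_\mu$ with $\mu \ne 0$ already lie in $U^+$, so the whole question reduces to showing $y_0 \in U^+$, i.e. $y_0 \in \BF$. Since $s_i$ is a bijection on $Q$, the weight-zero component of $x = T_i(y)$ is precisely $T_i(y_0)$, and the hypothesis $x \in U^+$ forces $T_i(y_0) \in U^+ \cap U^0 = \BF$. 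Combined with $T_i|_{U^0}$ being an algebra automorphism fixing $\BF$, this yields $y_0 \in \BF \subset U^+$, hence $y \in U^+$ and $x \in T_i(U^+)$. The reverse inclusion is immediate from $U^+ \subset U^{\geqq 0}$.

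The remaining three equalities are proved by the same two-line argument, with $U^-$ and $U^{\leqq 0}$ substituted for $U^+$ and $U^{\geqq 0}$ and/or $T_i^{-1}$ substituted for $T_i$; nothing in the weight bookkeeping changes. I do not anticipate any genuine obstacle, since the only nontrivial input is the existence of the weight grading on $U^{\geqq 0}$ and $U^{\leqq 0}$ together with the $T_i$-stability of $U^0$, both of which are standard consequences of Lusztig's definition.
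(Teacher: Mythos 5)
Your argument rests on a false description of the weight grading of $U^{\geqq0}$. Since every $k_h$ has weight zero, the triangular decomposition $U^{\geqq0}\cong U^+\otimes U^0$ gives $U^{\geqq0}=\bigoplus_{\gamma\in Q^+}U^+_\gamma U^0$, so for $\gamma\ne0$ the weight-$\gamma$ piece of $U^{\geqq0}$ is $U^+_\gamma U^0$, \emph{not} $U^+_\gamma$: for instance $e_jk_h$ is a weight vector of nonzero weight $\alpha_j$ lying in $U^{\geqq0}$ but not in $U^+$ when $h\ne0$. Consequently your key step ``the components $y_\mu$ with $\mu\ne0$ already lie in $U^+$'' fails. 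Writing $y=T_i^{-1}(x)=\sum_{h\in\Gh_\BZ}u_hk_h$ with $u_h\in U^+$ of a fixed degree, all the terms $u_hk_h$ share the same weight, so no amount of weight bookkeeping can distinguish them or force $u_h=0$ for $h\ne0$. That vanishing is precisely the nontrivial content of the lemma, and your proposal never addresses it; the zero-weight reduction you perform only treats the (trivial) case where the entire element has weight zero.

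This is why the paper's proof is not a two-line grading argument. It computes $T_i^{-1}(u)(v\otimes v_\lambda)$ in two ways on $V\otimes V_+(\lambda)$ for an arbitrary integrable module $V$, using \eqref{eq:T1}, Lemma \ref{lem:TP} and the fact that $Z_i$ acts as the identity on vectors of the form $v'\otimes v_\lambda$, to deduce $\sum_{h\in\Gh_\BZ}(q^{\langle\lambda,h\rangle}-1)u_hV=\{0\}$ for every integrable $V$ and every $\lambda\in P^+$; then the separation result \cite[3.5.4]{Lbook} (integrable modules detect elements of $U$) gives $\sum_h(q^{\langle\lambda,h\rangle}-1)u_h=0$ for all $\lambda\in P^+$, whence $u_h=0$ for $h\ne0$. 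Any repair of your argument must supply an input of this strength --- some mechanism that separates the $k_h$-components of an element of $U^+U^0$ --- since the weight grading alone cannot do it.
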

\begin{proof}
We only show the first formula since the proof of the others are similar.

Let
$u\in U^+\cap T_i(U^{\geqq0})$.
Let $V$ be an integrable $U$-module, and let $v\in V$.
For
$\lambda\in P^+$ we have
\begin{align*}
T_i^{-1}(u)(v\otimes v_\lambda)
=
(Z_i^{-1}\cdot(T_i^{-1}\otimes T_i^{-1})(\Delta(u)))(v\otimes v_\lambda)
\end{align*}
by \eqref{eq:T1}.
By Lemma \ref{lem:TP} we have 
\[
\Delta(u)\in u\otimes1+U^{\geqq0}\otimes((\bigoplus_{\gamma\in Q^+\setminus\{0\}}U^+_\gamma)\cap T_i(U^{\geqq0})),
\]
and hence
\[
(T_i^{-1}\otimes T_i^{-1})(\Delta(u))
\in T_i^{-1}(u)\otimes1+U\otimes U^0(\bigoplus_{\gamma\in Q^+\setminus\{0\}}U^+_\gamma).
\]
Therefore, we have
\[
T_i^{-1}(u)(v\otimes v_\lambda)
=Z_i^{-1}
(T_i^{-1}(u)v\otimes v_\lambda)
=T_i^{-1}(u)v\otimes v_\lambda.
\]
Write
\[
T_i^{-1}(u)=\sum_{h\in\Gh_\BZ}u_h k_h\qquad(u_\gamma\in U^+).
\]
Then we have
\begin{align*}
T_i^{-1}(u)(v\otimes v_\lambda)=&
\sum_h
u_h (k_h v\otimes k_h v_\lambda)
=\sum_h q^{\langle\lambda,h\rangle}
u_h(k_h v\otimes v_\lambda)
\\
=&\sum_h q^{\langle\lambda,h\rangle}
u_hk_h v\otimes v_\lambda,
\\
T_i^{-1}(u)v\otimes v_\lambda
=&\sum_h 
u_h k_h v\otimes v_\lambda,
\end{align*}
and hence
$\sum_{h\in\Gh_\BZ}(q^{\langle\lambda,h\rangle}-1)u_hV=\{0\}$
for any integrable $U$-module $V$.
By \cite[3.5.4]{Lbook} we obtain $\sum_{h\in\Gh_\BZ}(q^{\langle\lambda,h\rangle}-1)u_h=0$ for any $\lambda\in P^+$.
From this we see easily that 
$u_h=0$ for any $h\ne0$.
\end{proof}
By Lemma \ref{lem:TP} and \ref{lem:p0} we obtain
\begin{align}
\label{eq:TP1}
\Delta(U^+\cap T_i(U^+))\subset& U^{\geqq0}\otimes (U^+\cap T_i(U^+)),
\\
\label{eq:TP2}
\Delta(U^-\cap T_i(U^-))\subset& (U^-\cap T_i(U^-))\otimes U^{\leqq0},
\\
\label{eq:TP3}
\Delta(U^+\cap T_i^{-1}(U^+))\subset& (U^+\cap T_i^{-1}(U^+))U^0\otimes U^{\geqq0},
\\
\label{eq:TP4}
\Delta(U^-\cap T_i^{-1}(U^-))\subset& U^{\leqq0}\otimes (U^-\cap T_i^{-1}(U^-))U^0.
\end{align}

\section{Drinfeld pairing}
Set
\[
\tilde{U}^0=\bigoplus_{\gamma\in Q}\BF k_\gamma\subset U^0,
\qquad
\tilde{U}^{\geqq0}=\tilde{U}^0U^+,
\qquad
\tilde{U}^{\leqq0}=\tilde{U}^0U^-.
\]
The Drinfeld pairing is the bilinear form
\[
\tau:\tilde{U}^{\geqq0}\otimes\tilde{U}^{\leqq0}\to\BF
\]
characterized by the following properties:
\begin{align}
\label{eq:D1}
&
\tau(x,y_1y_2)=(\tau\otimes\tau)(\Delta(x),y_1\otimes y_2)
&(x\in \tilde{U}^{\geqq0}, y_1, y_2\in\tilde{U}^{\leqq0}),
\\
\label{eq:D2}
&
\tau(x_1x_2,y)=(\tau\otimes\tau)(x_2\otimes x_1,\Delta(y))
&(x_1, x_2\in \tilde{U}^{\geqq0}, y\in\tilde{U}^{\leqq0}),
\\
\label{eq:D3}
&
\tau(k_\gamma,k_\delta)=q^{-(\gamma,\delta)}
&(\gamma, \delta\in Q),
\\
\label{eq:D4}
&
\tau(e_i,f_j)=-\delta_{ij}(q_i-q_i^{-1})^{-1}
&(i, j\in I),
\\
\label{eq:D5}
&
\tau(e_i,k_\gamma)=\tau(k_\gamma,f_i)=0
&(i\in I, \gamma\in Q).
\end{align}
It satisfies the following properties:
\begin{align}
\label{eq:D6}
&
\tau(xk_\gamma,yk_\delta)=\tau(x,y)q^{-(\gamma,\delta)}
&(x\in U^+, y\in U^-,\gamma, \delta\in Q),
\\
\label{eq:D7}
&
\tau(U^+_\gamma, U^-_{-\delta})=\{0\}
&
(\gamma, \delta\in Q^+, \gamma\ne\delta),
\\
\label{eq:D8}
&
\tau|_{U^+_\gamma\times U^-_{-\gamma}}\;\text{is non-degenerate}
&(\gamma\in Q^+),
\\
\label{eq:D9}
&
\tau(Sx,Sy)=\tau(x,y)
&(x\in \tilde{U}^{\geqq0}, y\in\tilde{U}^{\leqq0}).
\end{align}
Moreover, for $x\in\tilde{U}^{\geqq0}$, $y\in \tilde{U}^{\leqq0}$ we have
\begin{align}
\label{eq:D10}
xy=&\sum_{(x)_2, (y)_2}
\tau(x_{(0)},y_{(0)})\tau(x_{(2)},Sy_{(2)})y_{(1)}x_{(1)},
\\
\label{eq:D11}
yx=&\sum_{(x)_2, (y)_2}
\tau(Sx_{(0)},y_{(0)})\tau(x_{(2)},y_{(2)})x_{(1)}y_{(1)}
\end{align}
(see \cite{T0}).

For the sake of completeness we include proofs of several well-known facts concerning $\tau$.
\begin{lemma}[see Proposition 38.1.6 of \cite{Lbook}]
\label{lem:pr}
We have
\begin{align*}
U^+\cap T_i(U^+)
=&
\{u\in U^+\mid\tau(u,U^-f_i)=\{0\}\},
\\
U^-\cap T_i(U^-)
=&
\{u\in U^-\mid\tau(U^+e_i,u)=\{0\}\},
\\
U^+\cap T_i^{-1}(U^+)
=&
\{u\in U^+\mid\tau(u,f_iU^-)=\{0\}\},
\\
U^-\cap T_i^{-1}(U^-)
=&
\{u\in U^-\mid\tau(e_iU^+,u)=\{0\}\}.
\end{align*}
\end{lemma}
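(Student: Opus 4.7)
The plan is to establish the first identity; the other three follow by strictly parallel arguments, using \eqref{eq:TP2}, \eqref{eq:TP3} and \eqref{eq:TP4} in place of \eqref{eq:TP1}. The key first step is to translate the condition $\tau(u, U^- f_i) = \{0\}$ into a condition on the coproduct of $u$. For $u \in U^+_\gamma$, the triangular decomposition yields $\Delta(u) \in \bigoplus_{\nu \in Q^+} U^+_{\gamma - \nu}\, k_\nu \otimes U^+_\nu$, and by \eqref{eq:D1},
\[
\tau(u, y f_i) = (\tau \otimes \tau)(\Delta(u), y \otimes f_i).
\]
By \eqref{eq:D5} and \eqref{eq:D7}, only the $\nu = \alpha_i$ component can contribute; since $U^+_{\alpha_i} = \BF e_i$, this component takes the form $r(u)\, k_{\alpha_i} \otimes e_i$ for a uniquely determined element $r(u) \in U^+_{\gamma - \alpha_i}$. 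Combining with \eqref{eq:D6} and \eqref{eq:D4} gives
\[
\tau(u, y f_i) = -(q_i - q_i^{-1})^{-1}\, \tau(r(u), y) \qquad (y \in U^-),
\]
so by the non-degeneracy \eqref{eq:D8}, the vanishing $\tau(u, U^- f_i) = \{0\}$ is equivalent to $r(u) = 0$.

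For the inclusion $\subseteq$, let $u \in U^+ \cap T_i(U^+)$. By \eqref{eq:TP1}, every second tensor factor appearing in $\Delta(u)$ lies in $U^+ \cap T_i(U^+)$. Since $T_i$ sends $U_\beta$ to $U_{s_i \beta}$ and $U^+_{-\alpha_i} = \{0\}$, one has $(U^+ \cap T_i(U^+))_{\alpha_i} = U^+_{\alpha_i} \cap T_i(U^+_{-\alpha_i}) = \{0\}$, forcing the $\nu = \alpha_i$ piece of $\Delta(u)$ to vanish; hence $r(u) = 0$ and the desired orthogonality holds.

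The opposite inclusion $\supseteq$ requires a dimension count in each weight space. On one hand, non-degeneracy \eqref{eq:D8} combined with the injectivity of right-multiplication by $f_i$ on $U^-$ (a standard PBW consequence) gives
\[
\dim\{u \in U^+_\gamma \mid \tau(u, U^- f_i) = \{0\}\} = \dim U^+_\gamma - \dim U^+_{\gamma - \alpha_i}.
\]
On the other hand, the PBW-type decomposition $U^+ = \bigoplus_{n \geq 0} (U^+ \cap T_i(U^+))\, e_i^n$ from \cite[\S 38.1]{Lbook} forces the same value for $\dim(U^+ \cap T_i(U^+))_\gamma$. Combined with the inclusion already proved, equality follows in each weight space. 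The main obstacle is this second dimension equality, which ultimately rests on the non-trivial PBW decomposition of $U^+$ adapted to $e_i$; a direct induction on $\gamma$ analysing $T_i^{-1}(u)$ through \eqref{eq:T1} is in principle available, but tracking the $f_i$-content through the coproduct formula there is less transparent than the dimension argument above.
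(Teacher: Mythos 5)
Your reduction of the condition $\tau(u,U^-f_i)=\{0\}$ to $r(u)=0$, and your proof of the inclusion $U^+\cap T_i(U^+)\subseteq\{u\in U^+\mid\tau(u,U^-f_i)=\{0\}\}$ via \eqref{eq:TP1} and the vanishing of the weight-$\alpha_i$ component, are correct and match the easy half of the paper's argument (the paper kills all components $u_mk_i^m\otimes e_i^m$, not just $m=1$, but the content is the same). The genuine gap is in the reverse inclusion. Your count $\dim\{u\in U^+_\gamma\mid\tau(u,U^-f_i)=\{0\}\}=\dim U^+_\gamma-\dim U^+_{\gamma-\alpha_i}$ is fine (the injectivity of right multiplication by $f_i$ is also used silently by the paper in Lemma \ref{lem:ten}), but the companion equality $\dim(U^+_\gamma\cap T_i(U^+))=\dim U^+_\gamma-\dim U^+_{\gamma-\alpha_i}$, which you extract from the decomposition $U^+=\bigoplus_{n\geqq0}(U^+\cap T_i(U^+))e_i^n$, is circular as cited. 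What \S38.1 of \cite{Lbook} provides independently of the statement under proof (38.1.2--38.1.3) is the analogous decomposition with the kernel of the $r_i$-type map --- i.e.\ with exactly the right-hand side of the lemma --- in place of $U^+\cap T_i(U^+)$; that only recomputes the dimension you already have. The decomposition phrased with $U^+\cap T_i(U^+)$ is a consequence of Proposition 38.1.6 of \cite{Lbook}, which \emph{is} the lemma being proved, and whose long computational proof this paper is written to replace. Within the present paper, the same decomposition is Lemma \ref{lem:ten}, whose proof explicitly invokes Lemma \ref{lem:pr}. So the hard inclusion $\supseteq$ is assumed rather than proved; your closing sentence correctly flags it as ``the main obstacle,'' but the proposal does not overcome it.

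For contrast, the paper's proof of the hard direction involves no dimension count at all: assuming $\tau(u,U^-f_i)=\{0\}$, it first shows that $\Delta(u)$ has no components $u_mk_i^m\otimes e_i^m$ with $m>0$ (this is \eqref{eq:pr1}, a strengthening of your $r(u)=0$ step to all powers of $e_i$); then, using the operator identity \eqref{eq:T1}, the vanishing $U^+_\gamma f_i^m v_\lambda=\{0\}$ for $\gamma\in Q^+\setminus\BZ_{\geqq0}\alpha_i$ (\eqref{eq:pr2}), and $T_iv_\lambda=f_i^{\langle\lambda,h_i\rangle}v_\lambda$, it deduces $T_i^{-1}(u)(V\otimes v_\lambda)\subset V\otimes v_\lambda$ for every integrable $V$ and $\lambda\in P^+$; Proposition \ref{prop:pos} then yields $T_i^{-1}(u)\in U^{\geqq0}$, and Lemma \ref{lem:p0} sharpens this to $T_i^{-1}(u)\in U^+$. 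To repair your scheme you would either have to prove the $e_i$-adapted decomposition of $U^+$ in terms of $U^+\cap T_i(U^+)$ without using the lemma --- essentially as hard as the lemma itself --- or carry out the direct analysis of $T_i^{-1}(u)$ through \eqref{eq:T1} that you only sketch at the end, which is precisely what the paper does.
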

\begin{proof}
We only show the first formula since the proof of the others are similar.

Assume $u\in U^+\cap T_i(U^+)$.
By
$U^+\cap T_i(U^+)\subset\bigoplus_{\gamma\in Q^+\cap s_iQ^+}U^+_\gamma$ and \eqref{eq:D7} we have $\tau(U^+\cap T_i(U^+),f_i)=0$.
Hence by \eqref{eq:TP1} we obtain
\[
\tau(u,U^-f_i)
=\sum_{(u)}\tau(u_{(0)},U^-)\tau(u_{(1)},f_i)=\{0\}.
\]

Assume $u\in U^+$ satisfies $\tau(u,U^-f_i)=\{0\}$.
We have only to show $T_i^{-1}(u)\in U^{\geqq0}$.
By Proposition \ref{prop:pos} it is sufficient to show that for any integrable $U$-module $V$ and any $\lambda\in P^+$ we have
\begin{equation}
\label{eq:claim}
T_i^{-1}(u)(V\otimes v_\lambda)\subset V\otimes v_\lambda.
\end{equation}

We first show
\begin{equation}
\label{eq:pr1}
\Delta(u)\subset \tilde{U}^{\geqq0}\otimes
\left(
\bigoplus_{\gamma\in Q^+\setminus\BZ_{>0}\alpha_i}U^+_\gamma
\right).
\end{equation}
For $r>0$ define $u_r\in U^+$ by
\[
\Delta(u)\in\sum_{r>0}u_rk_i^r\otimes e_i^r
+\tilde{U}^{\geqq0}\otimes
\left(
\bigoplus_{\gamma\in Q^+\setminus\BZ_{>0}\alpha_i}U^+_\gamma
\right).
\]
Then for 
$y\in U^-$, $m>0$ we have
\begin{align*}
0=&\tau(u, yf_i^m)=
\sum_{(u)}\tau(u_{(0)},y)\tau(u_{(1)},f_i^m)
=\tau(u_mk_i^m,y)\tau(e_i^m,f_i^m)
\\
=&\tau(u_m,y)\tau(e_i^m,f_i^m).
\end{align*}
By $\tau(e_i^m,f_i^m)\ne0$ we obtain $u_m=0$ for any $m>0$.
We have verified 
\eqref{eq:pr1}.

On the other hand by
$
U^+_\gamma f_i^m\subset
\sum_{r=0}^mf_i^rU^0U^+_{\gamma-r\alpha_i}
$
we have
\begin{equation}
\label{eq:pr2}
m\in\BZ_{\geqq0},\;\gamma\in Q^+\setminus\BZ_{\geqq0}\alpha_i
\;
\Longrightarrow
\;
U^+_\gamma f_i^mv_\lambda=\{0\}.
\end{equation}

Now we can show \eqref{eq:claim}.
By \eqref{eq:T1} we have
\begin{align*}
T_i^{-1}(u)(V\otimes v_\lambda)
=&
Z_i^{-1}(T_i^{-1}\otimes T_i^{-1})\Delta(u)(V\otimes T_iv_\lambda)
\\
=&
Z_i^{-1}(T_i^{-1}\otimes T_i^{-1})\Delta(u)(V\otimes 
f_i^{\langle\lambda,h_i\rangle}v_\lambda).
\end{align*}
By \eqref{eq:pr1}, \eqref{eq:pr2} we have 
\[
\Delta(u)(V\otimes f_i^{\langle\lambda,h_i\rangle}v_\lambda)
=(u\otimes1)(V\otimes f_i^{\langle\lambda,h_i\rangle}v_\lambda)
\subset V\otimes f_i^{\langle\lambda,h_i\rangle}v_\lambda,
\]
and hence 
\begin{align*}
T_i^{-1}(u)(V\otimes v_\lambda)
\subset
Z_i^{-1}(V\otimes 
T_i^{-1}f_i^{\langle\lambda,h_i\rangle}v_\lambda)
=
Z_i^{-1}(V\otimes v_\lambda)
=V\otimes v_\lambda.
\end{align*}
\end{proof}
\begin{lemma}[see Lemma 38.1.2 of \cite{Lbook}]
\label{lem:ten}
The multiplication of $U$ induces isomorphisms
\begin{align*}
U^+
\cong&
\BF[e_i]\otimes(U^+\cap T_i^{\pm1}(U^+))
\cong(U^+\cap T_i^{\pm1}(U^+))\otimes\BF[e_i],
\\
U^-
\cong&
\BF[f_i]\otimes(U^-\cap T_i^{\pm1}(U^-))
\cong(U^-\cap T_i^{\pm1}(U^-))\otimes\BF[f_i].
\end{align*}
\end{lemma}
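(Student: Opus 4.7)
My plan is to prove the first isomorphism $U^+ \cong \BF[e_i] \otimes (U^+ \cap T_i(U^+))$ induced by multiplication; the other three decompositions follow by entirely analogous arguments (swapping $T_i \leftrightarrow T_i^{-1}$, $U^+ \leftrightarrow U^-$, or the factor order on either side).

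The key tool will be a weight-lowering operator $\partial_i \colon U^+ \to U^+$ of weight $-\alpha_i$, isolated from the coproduct exactly as in the proof of Lemma \ref{lem:pr}: for $u \in U^+_\gamma$ I define $\partial_i(u) \in U^+_{\gamma-\alpha_i}$ by
$$\Delta(u) \in u \otimes 1 + \partial_i(u) k_i \otimes e_i + \tilde{U}^{\geqq 0} \otimes \bigoplus_{\beta \in Q^+,\, \beta \ne 0, \alpha_i} U^+_\beta.$$
Combined with \eqref{eq:D1} and \eqref{eq:D3}--\eqref{eq:D5}, this yields $\tau(u, y f_i) = \tau(\partial_i(u), y)\,\tau(e_i, f_i)$, so Lemma \ref{lem:pr} together with the non-degeneracy \eqref{eq:D8} gives $U^+ \cap T_i(U^+) = \ker \partial_i$. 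A direct computation from $\Delta(e_i u) = (e_i \otimes 1 + k_i \otimes e_i)\,\Delta(u)$ then yields the commutation identity
$$\partial_i(e_i u) = e_i\, \partial_i(u) + q^{(\alpha_i,\gamma)} u \qquad (u \in U^+_\gamma),$$
whose iteration gives $\partial_i(e_i^n a) = q^{(\alpha_i,\beta)} q_i^{n-1}[n]_{q_i} e_i^{n-1} a$ for $a \in A := U^+ \cap T_i(U^+)$ of weight $\beta$.

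With these formulas I will prove $U^+_\gamma = \bigoplus_{n \geqq 0} e_i^n A_{\gamma - n\alpha_i}$ by induction on $\Ht(\gamma)$. For surjectivity: given $u \in U^+_\gamma$, the inductive hypothesis lets me write $\partial_i(u) = \sum_n e_i^n b_n$ with $b_n \in A$, and the iterated formula lets me choose scalars $c_n$ so that $u' := \sum_n c_n e_i^{n+1} b_n$ satisfies $\partial_i(u') = \partial_i(u)$; hence $u - u' \in \ker \partial_i = A_\gamma$ and $u$ lies in the claimed sum. For injectivity: applying $\partial_i$ to a relation $\sum_n e_i^n a_n = 0$ with $a_n \in A$ produces, via $\partial_i(a_n) = 0$, a shorter relation in weight $\gamma - \alpha_i$, which by induction forces all $a_n$ with $n \geqq 1$ to vanish, and then $a_0 = 0$ follows. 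The main obstacle I anticipate is the careful bookkeeping of $q$-factors and normalizations in establishing the commutation identity; once that is settled, the inductive decomposition is essentially mechanical.
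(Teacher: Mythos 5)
Your argument is correct, but it takes a genuinely different route from the paper's. The paper gets injectivity of the multiplication map almost for free: applying the algebra automorphism $T_i^{-1}$ carries $\BF[e_i]\otimes(U^+\cap T_i(U^+))$ into $U^{\leqq0}\otimes U^+$, where injectivity is immediate from the triangular decomposition; surjectivity is then a dimension count, since Lemma \ref{lem:pr} together with \eqref{eq:D7} and \eqref{eq:D8} yields $\dim(U^+_\delta\cap T_i(U^+))=\dim U^+_\delta-\dim U^+_{\delta-\alpha_i}$, and the claim follows by telescoping over $\delta=\gamma-r\alpha_i$. You instead construct the skew derivation $\partial_i$ (Lusztig's $r_i$), identify $U^+\cap T_i(U^+)=\Ker\partial_i$ --- note this step rests on exactly the same inputs as the paper's count, namely Lemma \ref{lem:pr} and the non-degeneracy \eqref{eq:D8} --- and run an induction on $\Ht(\gamma)$. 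Your computations check out: from $k_iu=q^{(\alpha_i,\gamma)}uk_i$ one indeed gets $\partial_i(e_iu)=e_i\partial_i(u)+q^{(\alpha_i,\gamma)}u$, hence $\partial_i(e_i^na)=q^{(\alpha_i,\beta)}q_i^{n-1}[n]_{q_i}e_i^{n-1}a$ for $a\in\Ker\partial_i\cap U^+_\beta$, and since $[n]_{q_i}\neq0$ in $\BF=\BQ(q)$ both your surjectivity correction $u'$ and the injectivity induction go through. As to what each approach buys: the paper's proof is shorter and introduces no new operator, but it implicitly uses finite dimensionality of weight spaces and injectivity of right multiplication by $f_i$ on $U^-$ (to get $\dim(U^-_{-\delta}\cap U^-f_i)=\dim U^-_{-(\delta-\alpha_i)}$), whereas yours is constructive --- it decomposes a given $u$ algorithmically, avoids the dimension count entirely, and en route reproves the useful description of $U^+\cap T_i(U^+)$ as the kernel of a derivation, which is essentially the original approach of \cite{Lbook}. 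One point you should make explicit when claiming the remaining three cases are ``entirely analogous'': the right-handed version $(U^+\cap T_i(U^+))\otimes\BF[e_i]$ needs the companion identity $\partial_i(ue_i)=q_i^2\,\partial_i(u)e_i+u$ (same $\partial_i$, multiplication on the other side), while the $T_i^{-1}$ cases require the second derivation obtained by isolating the $e_ik_{\gamma-\alpha_i}\otimes(\cdot)$ component of $\Delta(u)$, matched with the characterizations $\tau(u,f_iU^-)=\{0\}$ resp.\ $\tau(e_iU^+,u)=\{0\}$ from Lemma \ref{lem:pr}; these variants are routine but not literally the same computation.
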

\begin{proof}
We only show 
$U^+\cong\BF[e_i]\otimes(U^+\cap T_i(U^+))$ since other formulas are proved similarly.
The injectivity of $\BF[e_i]\otimes(U^+\cap T_i(U^+))\to U^+$ follows from
$T_i^{-1}(\BF[e_i])\otimes T_i^{-1}(U^+\cap T_i(U^+))\subset U^{\leqq0}\otimes U^+\cong U$.
Hence it is sufficient to show that for any $\gamma\in Q$ we have
\[
\dim U^+_\gamma=\sum_{r\geqq0}\dim (U^+_{\gamma-r\alpha_i}\cap T_i(U^+)).
\]
For $\delta\in Q$ we have
$\dim (U^-_{-\delta}\cap U^-f_i)=\dim U^-_{-(\delta-\alpha_i)}=\dim U^+_{\delta-\alpha_i}$,
and hence $\dim (U^+_\delta\cap T_i(U^+))=\dim U^+_\delta-\dim U^+_{\delta-\alpha_i}$ by Lemma \ref{lem:pr}, \eqref{eq:D7}, \eqref{eq:D8}.
It follows that 
\[
\sum_{r\geqq0}\dim (U_{\gamma-r\alpha_i}\cap T_i(U^+))
=\sum_{r\geqq0}(\dim U^+_{\gamma-r\alpha_i}-\dim U^+_{\gamma-(r+1)\alpha_i})=\dim U^+_\gamma
\]
since $\dim U^+_{\gamma-r\alpha_i}=0$ for $r\gg0$.
\end{proof}
\begin{lemma}[see Proposition 38.2.3 of \cite{Lbook}]
\label{lem:sep}
\begin{itemize}
\item[(i)]
For $x\in U^+\cap T_i(U^+)$, $y\in U^-\cap T_i(U^-)$, $m, n\in\BZ_{\geqq0}$
we have
\[
\tau(xe_i^m,yf_i^n)=\tau(x,y)\tau(e_i^m,f_i^n).
\]
\item[(ii)]
For $x\in U^+\cap T_i^{-1}(U^+)$, $y\in U^-\cap T_i^{-1}(U^-)$, $m, n\in\BZ_{\geqq0}$
we have
\[
\tau(e_i^mx,f_i^ny)=\tau(x,y)\tau(e_i^m,f_i^n).
\]
\end{itemize}
\end{lemma}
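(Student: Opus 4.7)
The plan is to carry out part~(i) via a direct Hopf-pairing computation; part~(ii) will follow by a completely parallel argument after replacing \eqref{eq:TP2} by \eqref{eq:TP4} and appealing to the corresponding $T_i^{-1}$-statements of Lemma~\ref{lem:pr}.

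For (i), the first step will be to apply \eqref{eq:D2} to write
\[
\tau(xe_i^m, yf_i^n) = (\tau\otimes\tau)(e_i^m\otimes x,\,\Delta(yf_i^n))
\]
and then to expand $\Delta(yf_i^n) = \Delta(y)\Delta(f_i^n)$. Using \eqref{eq:TP2} we write $\Delta(y) = \sum_{(y)} y^{(0)}\otimes y^{(1)}$ with $y^{(0)} \in U^-\cap T_i(U^-)$ and $y^{(1)} \in U^{\leqq 0}$, and using the standard quantum-binomial formula $\Delta(f_i^n) = \sum_{s=0}^{n} c_s\, f_i^s\otimes k_i^{-s}f_i^{n-s}$ with $c_n = 1$, the pairing becomes a double sum whose generic summand is $c_s\,\tau(e_i^m, y^{(0)}f_i^s)\,\tau(x, y^{(1)}k_i^{-s}f_i^{n-s})$.

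The next step will be to show that only $s = n$ contributes. Writing $y^{(1)}$ in triangular form $\sum u_j k_{\beta_j}$ (with $u_j \in U^-$), commuting the Cartan part past $f_i^{n-s}$, and absorbing it by \eqref{eq:D6}, the second factor reduces to an $\BF$-linear combination of $\tau(x, u_j f_i^{n-s})$. When $n-s\geq 1$ these elements lie in $U^-f_i$, and the first identity of Lemma~\ref{lem:pr} (applied to $x\in U^+\cap T_i(U^+)$) forces them to vanish. After setting $s = n$ and using \eqref{eq:D6} once more to absorb the remaining $k_i^{-n}$, one is left with
\[
\tau(xe_i^m, yf_i^n) = \sum_{(y)} \tau(e_i^m, y^{(0)}f_i^n)\,\tau(x, y^{(1)}).
\]

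Finally, the same idea will be applied to $\tau(e_i^m, y^{(0)}f_i^n)$: expand by \eqref{eq:D1} using $\Delta(e_i^m) = \sum_{r=0}^m b_r\, e_i^{m-r}k_i^r\otimes e_i^r$ with $b_m = 1$; by the second identity of Lemma~\ref{lem:pr} (now applied to $y^{(0)}\in U^-\cap T_i(U^-)$), every term with $m-r\geq 1$ dies because $e_i^{m-r}\in U^+e_i$, leaving only $r=m$, which yields $\tau(e_i^m, y^{(0)}f_i^n) = \epsilon(y^{(0)})\,\tau(e_i^m, f_i^n)$. Substituting back and invoking the counit identity $\sum_{(y)}\epsilon(y^{(0)})y^{(1)} = y$ produces the formula in (i). The main obstacle is the bookkeeping of $q$-powers introduced when commuting $k_i$-factors past $e_i$ and $f_i$, but each such scalar is absorbed cleanly by \eqref{eq:D6}; no combinatorial identity beyond those already established is needed.
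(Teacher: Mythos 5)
Your proposal is correct and is essentially the paper's own proof: both expand $\tau(xe_i^m,yf_i^n)$ via \eqref{eq:D2}, use \eqref{eq:TP2} together with the first identity of Lemma \ref{lem:pr} to kill all terms of the quantum-binomial expansion of $\Delta(f_i^n)$ except $f_i^n\otimes k_i^{-n}$, then expand $\tau(e_i^m,y_{(0)}f_i^n)$ via \eqref{eq:D1} and use the second identity of Lemma \ref{lem:pr} to reduce to $\tau(k_i^m,y_{(0)})\tau(e_i^m,f_i^n)$, finishing with \eqref{eq:D6} and the counit axiom. Your write-up merely makes explicit the $q$-power and Cartan-factor bookkeeping (and the identification $\tau(k_i^m,y_{(0)})=\varepsilon(y_{(0)})$) that the paper leaves implicit, and the reduction of (ii) to a parallel argument via \eqref{eq:TP4} and the $T_i^{-1}$-statements of Lemma \ref{lem:pr} matches the paper's ``proved similarly.''
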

\begin{proof}
We only show (i) since (ii) is proved similarly.
For $x\in U^+\cap T_i(U^+)$, $y\in U^-\cap T_i(U^-)$, $m, n\in\BZ_{\geqq0}$
we have
\begin{align*}
\tau(xe_i^m,yf_i^n)=&
(\tau\otimes\tau)(e_i^m\otimes x,\Delta(yf_i^n))
\\
=&(\tau\otimes\tau)(e_i^m\otimes x,\sum_{(y)}y_{(0)}f_i^n\otimes y_{(1)}k_i^{-n}))
\\
=&
\sum_{(y)}
\tau(e_i^m,y_{(0)}f_i^n)\tau(x,y_{(1)}k_i^{-n})
\\
=&
\sum_{(y)}
(\tau\otimes\tau)(\Delta(e_i^m),y_{(0)}\otimes f_i^n)\tau(x,y_{(1)})
\\
=&
\sum_{(y)}
\tau(k_i^m,y_{(0)})
\tau(e_i^m,f_i^n)
\tau(x,y_{(1)})
=\tau(e_i^m,f_i^n)
\tau(x,y).
\end{align*}
Here, the second identity follows from Lemma \ref{lem:pr}, and the fifth identity is a consequence of 
\eqref{eq:TP2}
and Lemma \ref{lem:pr}.
The statement (i) is proved.
\end{proof}

\section{Invariance}
\subsection{Main result}
The purpose of this note is to give two simple proofs of the following fact.
\begin{theorem}[see Proposition 38.2.1 of \cite{Lbook}]
\label{theorem}
For $x\in U^+\cap T_i(U^+)$, $y\in U^-\cap T_i(U^-)$ we have
\[
\tau(T_i^{-1}(x), T_i^{-1}(y))=\tau(x, y).
\]
\end{theorem}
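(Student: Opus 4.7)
My plan is to exploit the uniqueness of the Drinfeld pairing: I define an auxiliary bilinear form $\tau'$ on $\tilde{U}^{\geq 0} \times \tilde{U}^{\leq 0}$ built from $\tau$ and $T_i^{-1}$, verify that $\tau'$ satisfies all the characterizing axioms \eqref{eq:D1}-\eqref{eq:D5}, conclude $\tau' = \tau$ by uniqueness, and read off the theorem by restriction.

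Concretely, Lemma \ref{lem:ten} implies that every $x \in \tilde{U}^{\geq 0}$ has a unique decomposition $x = \sum_{m,\gamma} x'_{m,\gamma}\, e_i^m\, k_\gamma$ with $x'_{m,\gamma} \in U^+ \cap T_i(U^+)$, and likewise every $y \in \tilde{U}^{\leq 0}$ has $y = \sum_{n,\delta} y'_{n,\delta}\, f_i^n\, k_\delta$ with $y'_{n,\delta} \in U^- \cap T_i(U^-)$. Define
\[
\tau'(x, y) \;:=\; \sum \tau\bigl(T_i^{-1}(x'_{m,\gamma}),\, T_i^{-1}(y'_{n,\delta})\bigr)\, \tau(e_i^m, f_i^n)\, q^{-(\gamma,\delta)}.
\]
Since $T_i^{-1}(1) = 1$, the axioms \eqref{eq:D3}-\eqref{eq:D5} for $\tau'$ are immediate. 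Moreover, for $x \in U^+ \cap T_i(U^+)$ and $y \in U^- \cap T_i(U^-)$ only the term with $m = n = 0$, $\gamma = \delta = 0$ survives in the sum, giving $\tau'(x, y) = \tau(T_i^{-1}(x), T_i^{-1}(y))$; hence the equality $\tau' = \tau$ already delivers the theorem. The task is therefore reduced to verifying the Hopf-pairing axioms \eqref{eq:D1}-\eqref{eq:D2} for $\tau'$.

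The verification of \eqref{eq:D1}-\eqref{eq:D2} is the substance of the proof, and I expect it to be \textbf{the main obstacle}. Expanding $\Delta(x)$ for $x = x'_{m,\gamma}\, e_i^m\, k_\gamma$ using the binomial formula for $\Delta(e_i^m)$ together with \eqref{eq:TP1} (which pins the second tensor factor of $\Delta(x'_{m,\gamma})$ inside $U^+ \cap T_i(U^+)$), and then reassembling each tensor factor into the canonical $(U^+ \cap T_i(U^+)) \cdot \BF[e_i] \cdot \tilde{U}^0$-form of Lemma \ref{lem:ten}, yields a multi-sum to be matched against $\tau'(x, y_1 y_2)$. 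The decisive compatibility is formula \eqref{eq:T1}, which expresses $\Delta \circ T_i^{-1}$ as $(T_i^{-1} \otimes T_i^{-1}) \circ \Delta$ conjugated by $Z_i = \exp_{q_i}((q_i - q_i^{-1}) f_i \otimes e_i)$. Now $Z_i$ is, up to normalization, the canonical element of the restricted pairing $\tau|_{\BF[e_i] \times \BF[f_i]}$, and the $Z_i$-conjugation is therefore exactly absorbed by the factors $\tau(e_i^m, f_i^n)$ appearing in the definition of $\tau'$. The real difficulty lies in the combinatorial bookkeeping needed to match Sweedler coefficients across the Lemma \ref{lem:ten}-decompositions with the $Z_i$-conjugation data; once this alignment is achieved, the identities \eqref{eq:D1}-\eqref{eq:D2} fall out formally, and the theorem follows.
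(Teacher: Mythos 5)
Your auxiliary pairing is, up to the torus factor, exactly the $\tilde{\tau}$ of the paper's second proof (which sets $\tilde{\tau}(xe_i^m,yf_i^n)=\tau(T_i^{-1}(x),T_i^{-1}(y))\tau(e_i^m,f_i^n)$ via Lemma \ref{lem:ten}), so your setup matches the paper's. But the two arguments diverge precisely where yours stops. The paper never verifies the Hopf-pairing axioms \eqref{eq:D1}--\eqref{eq:D2} for $\tilde{\tau}$; instead it encodes $\tilde{\tau}$ in its canonical element $\tilde{\Theta}=(T_i\otimes T_i)(\Theta'')\cdot R_i$ and proves the intertwining identity $\Delta'(u)\cdot\tilde{\Theta}=\tilde{\Theta}\cdot\Phi(\Delta(u))$ by a short chain of operator identities (\eqref{eq:Phi}, \eqref{eq:T1}, \eqref{eq:T2}, \eqref{eq:rel}), then invokes the uniqueness of the quasi-$R$-matrix (Proposition \ref{prop:R}). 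Your plan replaces that uniqueness statement by the uniqueness of $\tau$ itself, which requires a direct verification of \eqref{eq:D1}--\eqref{eq:D2} for $\tau'$ --- and this you explicitly defer (``once this alignment is achieved \dots\ fall out formally''). That deferred step is not mere bookkeeping: to compute $\tau'(x,y_1y_2)$ you must re-straighten the product $y_1y_2$ into the $(U^-\cap T_i(U^-))\cdot\BF[f_i]\cdot\tilde{U}^0$ form, i.e.\ you need to commute powers of $f_i$ past $U^-\cap T_i(U^-)$, whose structure constants are not controlled by anything in the paper; and to expand $(\tau'\otimes\tau')(\Delta(x),y_1\otimes y_2)$ you must re-decompose \emph{both} tensor factors of $\Delta(x'e_i^mk_\gamma)$, whereas \eqref{eq:TP1} pins down only one factor. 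Carrying this out directly is essentially Lusztig's original Chapter-8A-style computation that this paper was written to avoid. As it stands, the proposal is a strategy statement, not a proof: the entire content of the theorem is concentrated in the unverified step.

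There is also a smaller but genuine error: the claim that \eqref{eq:D3}--\eqref{eq:D5} are ``immediate'' fails for \eqref{eq:D4}. For $j\neq i$ one has $e_j\in U^+\cap T_i(U^+)$ and $f_j\in U^-\cap T_i(U^-)$ (by Lemma \ref{lem:pr} and weight considerations), so your definition gives $\tau'(e_j,f_j)=\tau(T_i^{-1}(e_j),T_i^{-1}(f_j))$, and the required identity $\tau'(e_j,f_j)=-(q_j-q_j^{-1})^{-1}$ is literally an instance of the theorem being proved. It is a provable instance --- a finite rank-two computation with the explicit formulas for $T_i^{\pm1}(e_j)$, $T_i^{\pm1}(f_j)$, of the kind done in Jantzen's book --- but it must actually be done, and its appearance shows that your axiomatic frame quietly front-loads nontrivial special cases of the statement into the ``immediate'' axioms.
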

\subsection{The first proof}

By the triangular decomposition $U\cong U^-\otimes U^0\otimes U^+$ we have
\[
U=\{(U^-\cap\Ker(\varepsilon))U+U(U^+\cap\Ker(\varepsilon))\}
\oplus U^0.
\]
We define a linear map
\[
p:U\to U^0
\]
as the projection with respect to this direct sum decomposition.
The following fact is crucial.
\begin{proposition}[see Proposition 19.3.7 of \cite{Lbook}]
\label{prop:DS}
Let $\gamma\in Q^+$, and let $x\in U^+_\gamma$, $y\in U^-_{-\gamma}$.
Assume 
\[
\Delta(x)\in x\otimes1+
\sum_{\delta\in X}
U^{\geqq0}\otimes U^+_\delta
\]
for $X\subset Q^+\setminus\{0\}$.
Then we have
\[
p(xy)\in k_{-\gamma}\left(\tau(x,y)+\sum_{\delta\in X}\BF k_{2\delta}\right).
\]
\end{proposition}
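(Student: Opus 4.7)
The plan is to apply $p$ to formula \eqref{eq:D10},
\[
xy = \sum_{(x)_2,(y)_2} \tau(x_{(0)},y_{(0)})\,\tau(x_{(2)},Sy_{(2)})\,y_{(1)}x_{(1)},
\]
and extract the surviving contributions. Since $p$ kills any factor of $U^-\cap\Ker(\varepsilon)$ on the left or $U^+\cap\Ker(\varepsilon)$ on the right (immediate from the defining direct sum decomposition), $p(y_{(1)}x_{(1)}) = 0$ unless both $y_{(1)}$ and $x_{(1)}$ lie in $U^0$. So only the ``middle-in-$U^0$'' parts of $\Delta_2(x)$ and $\Delta_2(y)$ can contribute.

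I next identify those parts. From $\Delta(e_i) = e_i\otimes 1 + k_i\otimes e_i$ and multiplicativity of $\Delta$, for any $u\in U^+_\nu$ the first-tensor-factor-in-$U^0$ component of $\Delta(u)$ is $k_\nu \otimes u$ with coefficient $1$; dually, for $v\in U^-_{-\nu}$ the first-factor-in-$U^0$ component of $\Delta(v)$ is $1\otimes v$ and the second-factor-in-$U^0$ component is $v\otimes k_{-\nu}$. Writing the hypothesis as $\Delta(x) = x\otimes 1 + \sum_{\delta\in X,\,s} z_s^{(\delta)}\otimes u_s^{(\delta)}$ with $u_s^{(\delta)}\in U^+_\delta$ and, by weight, $z_s^{(\delta)}\in U^+_{\gamma-\delta}k_\delta$, coassociativity $\Delta_2=(\id\otimes\Delta)\Delta$ gives the middle-$U^0$ part of $\Delta_2(x)$ as
\[
x \otimes 1 \otimes 1 \;+\; \sum_{\delta\in X,\,s} z_s^{(\delta)}\otimes k_\delta \otimes u_s^{(\delta)}.
\]
Analogously, writing $\Delta(y)=\sum_{\alpha+\beta=\gamma,\,t} y_{1,t}^{(-\alpha)}\otimes y_{2,t}^{(-\beta)}k_{-\alpha}$, the middle-$U^0$ part of $\Delta_2(y)$ equals $\sum y_{1,t}^{(-\alpha)}\otimes k_{-\alpha}\otimes y_{2,t}^{(-\beta)}k_{-\alpha}$.

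Substituting into \eqref{eq:D10}: the $x\otimes 1\otimes 1$ component of $\Delta_2(x)$ contributes $\sum_{(y)_2} \tau(x,y_{(0)})\varepsilon(y_{(2)}) p(y_{(1)})$, using $\tau(1,\cdot)=\varepsilon|_{\tilde{U}^{\leqq 0}}$ and $\varepsilon\circ S=\varepsilon$; the counit identity $\sum y_{(0)}\otimes y_{(1)}\varepsilon(y_{(2)})=\Delta(y)$ together with the weight constraint of \eqref{eq:D7} collapses this to $\tau(x,y)k_{-\gamma}$. For each $\delta\in X$, the $z_s^{(\delta)}\otimes k_\delta\otimes u_s^{(\delta)}$ component contributes, after weight matching on both $\tau$-factors (forcing $\alpha=\gamma-\delta$ and $\beta=\delta$) and tracking $S$ via \eqref{eq:D6} and $S(U^-_{-\nu})\subset U^-_{-\nu}k_\nu$, a scalar multiple of $p(k_{-(\gamma-\delta)}k_\delta)=k_{2\delta-\gamma}=k_{-\gamma}k_{2\delta}$. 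Summing yields
\[
p(xy) = \tau(x,y) k_{-\gamma} + \sum_{\delta\in X} c_\delta\, k_{-\gamma+2\delta} \in k_{-\gamma}\Bigl(\tau(x,y) + \sum_{\delta\in X}\BF k_{2\delta}\Bigr),
\]
as claimed. The main technical obstacle is the careful bookkeeping of $k$-factors through the antipode $S$ and through the coproduct on $U^-$, together with verifying that the ``leading'' coefficients in the $U^0$-projections of $\Delta$ are exactly $1$ so that the scalars $c_\delta$ are cleanly extracted.
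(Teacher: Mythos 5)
Your proof is correct and takes essentially the same route as the paper's: both apply $p$ to the commutation formula \eqref{eq:D10}, isolate the components of $\Delta_2(x)$ and $\Delta_2(y)$ whose middle tensor factor lies in $U^0$ (using that the leading grouplike coefficients $k_\nu\otimes u$, resp. $v\otimes k_{-\nu}$, occur with coefficient exactly $1$), and then invoke \eqref{eq:D6}, \eqref{eq:D7} for the weight matching that produces $\tau(x,y)k_{-\gamma}$ plus terms in $k_{-\gamma}\BF k_{2\delta}$ for $\delta\in X$. The only difference is bookkeeping: the paper records $\Delta_2(x)$, $\Delta_2(y)$ modulo explicit error subspaces such as $U^{\geqq0}\otimes U^0(U^+\cap\Ker(\varepsilon))\otimes U^+$ that are killed under $p$, whereas you extract the middle-$U^0$ parts directly and collapse the leading term via the counit identity.
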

\begin{proof}
Writing
\begin{align*}
\Delta(x)=&
\sum_r x'_rk_{\delta_r}\otimes x_r
\qquad
(\delta_r\in Q^+, x_r\in U^+_{\delta_r}, x'_r\in U^+_{\gamma-\delta_r}),
\\
\Delta(y)=&
\sum_s y_s\otimes k_{-\gamma_s}y'_s
\qquad
(\gamma_s\in Q^+, y_s\in U^-_{-\gamma_s}, y'_s\in U^-_{-(\gamma-\gamma_s)})
\end{align*}
we have
\begin{align*}
\Delta_2(x)\in&
\sum_r x'_rk_{\delta_r}\otimes k_{\delta_r}\otimes x_r
+U^{\geqq0}\otimes U^0(U^+\cap\Ker(\varepsilon))\otimes U^+,
\\
\Delta_2(y)\in&
\sum_s y_s\otimes k_{-\gamma_s}\otimes k_{-\gamma_s}y'_s
+
U^{-}\otimes (U^-\cap\Ker(\varepsilon))U^0\otimes U^{\leqq0}.
\end{align*}
Hence by \eqref{eq:D6}, \eqref{eq:D7}, \eqref{eq:D10} we have
\begin{align*}
p(xy)=&
\sum_{\delta_r+\gamma_s=\gamma}
\tau(x'_rk_{\delta_r},y_s)
\tau(x_r,S(k_{-\gamma_s}y'_s))k_{\delta_r-\gamma_s}
\\
=&
k_{-\gamma}\left(
\sum_{\delta_r+\gamma_s=\gamma}
\tau(x'_r,y_s)
\tau(x_r,S(y'_s))k_{2\delta_r}
\right),
\end{align*}
from which we easily obtain our desired result.
\end{proof}

Now let us give our first proof of Theorem \ref{theorem}.
We may assume 
$x\in U^+_\gamma\cap T_i(U^+)$, $y\in U^-_{-\gamma}\cap T_i(U^-)$ for $\gamma\in Q^+$.
By Proposition \ref{prop:DS} it is sufficient to show
\[
p(T_i^{-1}(x)T_i^{-1}(y))
\in 
k_{-s_i(\gamma)}
\left(
\tau(x,y)+\sum_{\delta\in Q^+\setminus\{0\}}\BF k_{2\delta}
\right).
\]
By \eqref{eq:TP1}, \eqref{eq:TP2}
we can write
\begin{align*}
\Delta(x)=
\sum_r x'_rk_{\delta_r}\otimes x_r,
\qquad
\Delta(y)=&
\sum_s y_s\otimes k_{-\gamma_s}y'_s,
\end{align*}
where
$\delta_r, \gamma_s\in Q^+\cap s_iQ^+$, 
$x_r\in U^+_{\delta_r}\cap T_i(U^+)$, $x'_r\in U^+_{\gamma-\delta_r}$, 
$y_s\in U^-_{-\gamma_s}\cap T_i(U^-)$, $y'_s\in U^-_{-(\gamma-\gamma_s)}$.
Furthermore, by \eqref{eq:TP1}, \eqref{eq:TP2} and Lemma \ref{lem:ten}
we can write
\begin{align*}
\Delta(x_r)\in&
\sum_{m\geqq0} 
e_i^{(m)}k_{\delta_{r}-m\alpha_i}\otimes x_{rm}
+U^{\geqq0}(U^+\cap T_i(U^+)\cap\Ker(\varepsilon))\otimes U^+,
\\
\Delta(y_s)\in&
\sum_{n\geqq0} 
y_{sn}
\otimes
k_{-(\gamma_s-n\alpha_i)}
f_i^{(n)}
+
U^-\otimes
(U^-\cap T_i(U^-)\cap\Ker(\varepsilon))U^{\leqq0},
\end{align*}
where $x_{rm}\in U^+_{\delta_{r}-m\alpha_i}\cap T_i(U^+)$, 
$y_{sn}\in U^-_{-(\gamma_s-n\alpha_i)}\cap T_i(U^-)$.
Then we have
\begin{multline*}
\Delta_2(x)
-\sum_{r,m}
x'_rk_{\delta_r}
\otimes
e_i^{(m)}k_{\delta_{r}-m\alpha_i}
\otimes 
x_{rm}
\\
\in
U^{\geqq0}\otimes U^{\geqq0}(U^+\cap T_i(U^+)\cap\Ker(\varepsilon))\otimes U^+,
\end{multline*}
\begin{multline*}
\Delta_2(y)
-\sum_{s,n}
y_{sn}
\otimes
k_{-(\gamma_s-n\alpha_i)}
f_i^{(n)}
\otimes 
k_{-\gamma_s}y'_s
\\
\in
U^-\otimes
(U^-\cap T_i(U^-)\cap\Ker(\varepsilon))U^{\leqq0}
\otimes U^{\leqq0}.
\end{multline*}
Hence by \eqref{eq:D6}, \eqref{eq:D7}, \eqref{eq:D10} we obtain
\begin{multline}
\label{eq:xy}
xy-
\sum_{\gamma_s+\delta_r=\gamma-m\alpha_i}
\tau(x'_r,y_{sm})
\tau(x_{rm},S(y'_s))
k_{-(\gamma_s-m\alpha_i)}
f_i^{(m)}
e_i^{(m)}k_{\delta_{r}-m\alpha_i}
\\
\in
(U^-\cap T_i(U^-)\cap\Ker(\varepsilon))U
+
U(U^+\cap T_i(U^+)\cap\Ker(\varepsilon)).
\end{multline}
In particular, we have
\[
p(xy)=
\sum_{\gamma_s+\delta_r=\gamma}
\tau(x'_r,y_{s0})
\tau(x_{r0},S(y'_s))
k_{-\gamma+2\delta_r},
\]
and hence 
\begin{equation}
\label{eq:txy}
\tau(x,y)=
\sum_{\gamma_s=\gamma, \delta_r=0}
\tau(x'_r,y_{s0})
\tau(x_{r0},S(y'_s))
\end{equation}
by Proposition \ref{prop:DS}.
Next we apply $T_i^{-1}$ to \eqref{eq:xy}.
We can easily check that 
\[
T_i^{-1}(f_i^{(m)}e_i^{(m)})=e_i^{(m)}f_i^{(m)}
\in 
\begin{bmatrix}
k_i
\\
m
\end{bmatrix}+U(U^+\cap\Ker(\varepsilon))+(U^-\cap\Ker(\varepsilon))U,
\]
where
\[
\begin{bmatrix}
k_i
\\
m
\end{bmatrix}
=
\prod_{r=1}^m\frac{q_i^{-(r-1)}k_i-q_i^{r-1}k_i^{-1}}{q_i^r-q_i^{-r}}.
\]
It follows that 
\begin{multline*}
T_i^{-1}(xy)
-
\sum_{\gamma_s+\delta_r=\gamma-m\alpha_i}
\tau(x'_r,y_{sm})
\tau(x_{rm},S(y'_s))
\begin{bmatrix}
k_i
\\
m
\end{bmatrix}
k_{s_i(\delta_{r}-\gamma_s)}
\\
\in
U(U^+\cap\Ker(\varepsilon))+(U^-\cap\Ker(\varepsilon))U,
\end{multline*}
and hence
\[
p(T_i^{-1}(xy))=
\sum_{\gamma_s+\delta_r=\gamma-m\alpha_i}
\tau(x'_r,y_{sm})
\tau(x_{rm},S(y'_s))
\begin{bmatrix}
k_i
\\
m
\end{bmatrix}
k_{s_i(\delta_{r}-\gamma_s)}.
\]
Note
\[
\begin{bmatrix}
k_i
\\
m
\end{bmatrix}
\in
k_{-m\alpha_i}
\left(
\BF^\times
+\sum_{n>0}\BF k_{2n\alpha_i}
\right).
\]
If 
$\gamma_s+\delta_r=\gamma-m\alpha_i$, 
then we have 
\[
s_i(\delta_r-\gamma_s)-m\alpha_i=
-s_i\gamma+2s_i(\delta_r-m\alpha_i).
\]
Recall that $x_{rm}\in U^+_{\delta_r-m\alpha_i}\cap T_i(U^+)$.
Hence if $x_{rm}\ne0$, then $s_i(\delta_r-m\alpha_i)\in Q^+$.
Moreover, by $\delta_r\in Q^+\cap s_iQ^+$, $\delta_r-m\alpha_i=0$ happens only if $\delta_r=0$ and $m=0$.
It follows that 
\begin{align*}
p(T_i^{-1}(xy))\in
&k_{-s_i\gamma}
\left(
\sum_{\delta_r=0,\gamma_s=\gamma}
\tau(x'_r,y_{s0})
\tau(x_{r0},S(y'_s))+\sum_{\delta\in Q^+\setminus\{0\}}\BF k_{2\delta}
\right)
\\
=&k_{-s_i\gamma}
\left(
\tau(x,y)+\sum_{\delta\in Q^+\setminus\{0\}}\BF k_{2\delta}
\right)
\end{align*}
by \eqref{eq:txy}.
The proof is complete.

\subsection{The second proof}
For each $\gamma\in Q^+$ we denote by $\Theta_\gamma\in U^+_\gamma\otimes U^-_{-\gamma}$ the canonical element of the non-degenerate bilinear form $\tau|_{U^+_\gamma\times U^-_{-\gamma}}$.
Namely, for bases $\{x_j\}$, $\{y_j\}$ of $U^+_\gamma$, $U^-_{-\gamma}$ respectively such that $\tau(x_j,y_k)=\delta_{jk}$ we set
$\Theta_\gamma=\sum_jx_j\otimes y_j$.
We regard the infinite sum
\begin{equation}
\Theta=\sum_{\gamma\in Q^+}\Theta_\gamma
\end{equation}
as an operator on the tensor product of two integrable $U$-modules.
For $u\in U$ we set
\[
\Delta'(u)=P(\Delta(u)),
\]
where $P(u_1\otimes u_2)=u_2\otimes u_1$.
The following fact is crucial.
\begin{proposition}[see Theorem 4.1.2 of \cite{Lbook}]
\label{prop:R}
We have
\begin{equation}
\label{eq:R}
\Delta'(u)\cdot\Theta=\Theta\cdot(\Phi(\Delta(u)))
\qquad(u\in U).
\end{equation}
Moreover, the family $\Theta_\gamma\in U^+_\gamma\otimes U^-_{-\gamma}$ $ (\gamma\in Q^+)$ is uniquely determined by the equation \eqref{eq:R}.
\end{proposition}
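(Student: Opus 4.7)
My plan is to reduce \eqref{eq:R} to a verification on the algebra generators of $U$, then handle each generator separately. First I observe that both $\Delta'$ and $\Phi\circ\Delta$ are algebra homomorphisms from $U$ into (an appropriate completion of) $U\otimes U$; hence the set of $u\in U$ satisfying $\Delta'(u)\Theta=\Theta\cdot\Phi(\Delta(u))$ is a subalgebra of $U$. So it suffices to verify \eqref{eq:R} for $u=k_h$, $u=e_i$, and $u=f_i$.

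The case $u=k_h$ will be immediate: each $\Theta_\gamma\in U^+_\gamma\otimes U^-_{-\gamma}$ has total weight zero and therefore commutes with $k_h\otimes k_h$, while $\Delta'(k_h)=\Phi(\Delta(k_h))=k_h\otimes k_h$. For $u=e_i$, we have $\Delta(e_i)=e_i\otimes 1+k_i\otimes e_i$, so $\Phi(\Delta(e_i))=e_i\otimes k_i^{-1}+1\otimes e_i$ and $\Delta'(e_i)=1\otimes e_i+e_i\otimes k_i$. Comparing graded components of \eqref{eq:R} in $U^+_\gamma\otimes U$, the equation reduces to the family of identities
\[
[1\otimes e_i,\,\Theta_\gamma]=\Theta_{\gamma-\alpha_i}(e_i\otimes k_i^{-1})-(e_i\otimes k_i)\,\Theta_{\gamma-\alpha_i}\qquad(\gamma\in Q^+),
\]
and I would verify these by writing $\Theta_\gamma=\sum_j x_j\otimes y_j$ with $\tau(x_j,y_k)=\delta_{jk}$, expanding $[e_i,y_j]\in U$ via the defining relation $e_if_j-f_je_i=\delta_{ij}(k_i-k_i^{-1})/(q_i-q_i^{-1})$, and matching the resulting coefficients against the dual-basis expansion of $\Theta_{\gamma-\alpha_i}$. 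What makes the matching go through is the identification, via \eqref{eq:D1}, \eqref{eq:D2} and \eqref{eq:D4}, of the two skew derivations of $U^-$ produced by $\ad(e_i)$ with the $\tau$-transposes of right and left multiplication by $f_i$; this is where the pairing axioms do the real work. The case $u=f_i$ will be handled symmetrically.

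For the uniqueness claim I would argue as follows: any graded solution $\Theta'=\sum_\gamma\Theta'_\gamma$ with $\Theta'_\gamma\in U^+_\gamma\otimes U^-_{-\gamma}$ satisfies the same commutator recursion, and comparing $(U^+_{\alpha_i}\otimes U)$-components of the equation for $u=e_i$ pins down $\Theta'_0\in\BF\cdot(1\otimes 1)$ up to a scalar which is forced to equal $1$ by the convention $\tau(1,1)=1$. Induction on the height of $\gamma$, using the recursion for varying $i\in I$ (and the non-degeneracy \eqref{eq:D8}), then propagates $\Theta'_\gamma=\Theta_\gamma$ to all $\gamma\in Q^+$.

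The main obstacle will be establishing the commutator identity for $u=e_i$; it is the technical core of the proposition and encodes the duality, mediated by $\tau$, between the adjoint $e_i$-action on $U^-$ and left/right multiplication by $f_i$. Once this identity is in place, the subalgebra reduction and weight-space induction make everything else formal.
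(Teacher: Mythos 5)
Your proposal is correct in outline, but note that the paper itself contains no proof of Proposition \ref{prop:R}: it is quoted verbatim from Theorem 4.1.2 of \cite{Lbook}, and what you have written is in substance a reconstruction of that standard proof (carried out in detail in Chapter 7 of \cite{Jan}). Your reductions are the right ones: the set of $u$ satisfying \eqref{eq:R} is a subalgebra because $\Delta'$ and $\Phi\circ\Delta$ are algebra homomorphisms; the case $u=k_h$ is immediate from weight considerations; and for $u=e_i$ the equation does decompose, component by component, into exactly the recursion $[1\otimes e_i,\Theta_\gamma]=\Theta_{\gamma-\alpha_i}(e_i\otimes k_i^{-1})-(e_i\otimes k_i)\Theta_{\gamma-\alpha_i}$ you display, whose verification rests, as you say, on identifying the two skew derivations of $U^-$ arising from $\ad(e_i)$ with the $\tau$-transposes of left and right multiplication by $f_i$ via \eqref{eq:D1}, \eqref{eq:D2}, \eqref{eq:D4}. (A minor point worth a sentence in a full write-up: separating graded components of an operator identity is legitimate because integrable modules separate elements of $U$, the fact from \cite[3.5.4]{Lbook} that the paper invokes elsewhere.)

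One point needs tightening in your uniqueness argument. Since \eqref{eq:R} is linear in $\Theta$, every scalar multiple $c\Theta$ is again a graded solution, so the equation alone cannot pin down the overall scalar; and your appeal to the convention $\tau(1,1)=1$ is not available for an arbitrary graded solution $\Theta'$, which is not assumed to be the canonical element of any pairing. The correct formulation (and the one proved in \cite{Lbook}) is: a graded solution normalized by $\Theta'_0=1\otimes1$ coincides with $\Theta$. With that hypothesis your height induction goes through: if $D_\gamma=\Theta_\gamma-\Theta'_\gamma$ and $D_{\gamma-\alpha_i}=0$ for all $i$, then $[1\otimes e_i,D_\gamma]=0$ forces both skew derivations of the $U^-$-components of $D_\gamma$ to vanish, whence $D_\gamma=0$ for $\gamma\ne0$ using $U^+_\gamma=\sum_i e_iU^+_{\gamma-\alpha_i}$ and the non-degeneracy \eqref{eq:D8}. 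The paper's own statement of uniqueness is loose in exactly the same way, and the gap is harmless for its application, since there both $\Theta$ and $\tilde{\Theta}$ are canonical elements of pairings with $\tau(1,1)=\tilde{\tau}(1,1)=1$, so both have degree-zero term $1\otimes1$.
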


Let us give our second proof of Theorem \ref{theorem}.

Define a bilinear form 
\[
\tilde{\tau}:U^+\times U^-\to\BF
\]
by
\[
\tilde{\tau}(xe_i^m,yf_i^n)=
\tau(T_i^{-1}(x),T_i^{-1}(y))\tau(e_i^m,f_i^n)
\]
for $x\in U^+\cap T_i(U^+)$, $y\in U^-\cap T_i(U^-)$, $m,n\in\BZ_{\geqq0}$ (see Lemma \ref{lem:ten}).
Then it is sufficient to show $\tau|_{U^+\times U^-}=\tilde{\tau}$ in view of Lemma \ref{lem:sep}.
For $\gamma\in Q^+$ let $\tilde{\Theta}_\gamma$ be the canonical element of $\tilde{\tau}|_{U^+_\gamma\times U^-_{-\gamma}}$, and set
$\tilde{\Theta}=\sum_{\gamma\in Q^+}\tilde{\Theta}_\gamma$.
Since $\tau|_{U^+\times U^-}$ and $\tilde{\tau}$ are uniquely determined by $\Theta$ and $\tilde{\Theta}$  respectively, it is sufficient to show $\Theta=\tilde{\Theta}$.
Moreover, by the uniqueness in Proposition \ref{prop:R}
this is equivalent to 
\begin{equation}
\label{eq:RR}
\Delta'(u)\cdot \tilde{\Theta}=\tilde{\Theta}\cdot\Phi(\Delta(u))\qquad
(u\in U).
\end{equation}

For $\gamma\in Q^+\cap s_i(Q^+)$ let $\Theta'_\gamma$ and $\Theta''_\gamma$ be the canonical elements of 
$\tau|_{(U^+_\gamma\cap T_i(U^+))\times (U^-_{-\gamma}\cap T_i(U^-))}$
and
$\tau|_{(U^+_\gamma\cap T_i^{-1}(U^+))\times (U^-_{-\gamma}\cap T_i^{-1}(U^-))}$
respectively, and set
$\Theta'=\sum_{\gamma\in Q^+\cap s_i(Q^+)}\Theta'_\gamma$ and
$\Theta''=\sum_{\gamma\in Q^+\cap s_i(Q^+)}\Theta''_\gamma$.
By Lemma \ref{lem:sep} and the formula 
\[
\tau(e_i^m,f_i^n)=\delta_{mn}
\frac{q_i^{n(n-1)/2}}{(q_i^{-1}-q_i)^n}[n]!_{q_i}
\]
we have 
\begin{equation}
\label{eq:rel}
\Theta=\Theta'\cdot R_i=R_i\cdot\Theta'',
\qquad
\tilde{\Theta}=(T_i\otimes T_i)(\Theta'')\cdot R_i.
\end{equation}
It follows that 
\begin{align*}
\Delta'(u)\cdot\tilde{\Theta}
=&
\Delta'(u)\cdot(T_i\otimes T_i)(\Theta'')\cdot R_i
\\
=&
(T_i\otimes T_i)((T_i^{-1}\otimes T_i^{-1})(\Delta'(u))\cdot\Theta'')\cdot R_i
\\
=&
(T_i\otimes T_i)(
R_i^{-1}\cdot
\Delta'(T_i^{-1}(u))
\cdot
R_i\Theta'')\cdot R_i
\\
=&
(T_i\otimes T_i)(
R_i^{-1}\cdot
\Delta'(T_i^{-1}(u))
\cdot
\Theta)\cdot R_i
\\
=&
(T_i\otimes T_i)(
R_i^{-1}\Theta\cdot
\Phi(\Delta(T_i^{-1}(u)))
)\cdot R_i
\\
=&
(T_i\otimes T_i)(
\Theta''\cdot
\Phi(\Delta(T_i^{-1}(u)))
)\cdot R_i
\\
=&
\tilde{\Theta}R_i^{-1}\cdot
(T_i\otimes T_i)(
\Phi(\Delta(T_i^{-1}(u)))
)\cdot R_i
\\
=&
\tilde{\Theta}
R_i^{-1}\cdot
\Phi(
(T_i\otimes T_i)(
\Delta(T_i^{-1}(u))
)
)
\cdot R_i
\\
=&
\tilde{\Theta}\cdot
\Phi(
\Phi^{-1}(R_i)^{-1}\cdot
(T_i\otimes T_i)(
\Delta(T_i^{-1}(u))
)
\cdot
\Phi^{-1}(R_i))
\\
=&\tilde{\Theta}\cdot\Phi(\Delta(u))
\end{align*}
by \eqref{eq:Phi}, \eqref{eq:T1}, \eqref{eq:T2}, \eqref{eq:rel}.
We have proved \eqref{eq:RR}, and hence our second proof of Theorem \ref{theorem} is complete.

\bibliographystyle{unsrt}

\end{document}